 \theoremstyle{plain}
 \newtheorem{theorem}  {Theorem}  [section]
 \newtheorem{lemma}  [theorem]   {Lemma}
 \newtheorem{corollary}[theorem] {Corollary}
 \newtheorem{fact} [theorem] {Fact}
 \newtheorem{proposition} [theorem] {Proposition}
 \theoremstyle{definition}
 \newtheorem{remark}[theorem] {Remark}
 \newtheorem{definition}[theorem] {Definition}
\newcommand{\iso}[1][]{\buildrel {#1} \over \cong}
\newcommand{\ot}{{\leftarrow}}
\xdef\csname b\x\endcsname{\noexpand\ensuremath{\noexpand\mathbf{\x}}}
\xdef\csname B\x\endcsname{\noexpand\ensuremath{\noexpand\mathbb{\x}}}
\xdef\csname c\x\endcsname{\noexpand\ensuremath{\noexpand\mathcal{\x}}}
\xdef\csname s\x\endcsname{\noexpand\ensuremath{\noexpand\mathscr{\x}}}
\let\polishlcross=\l
\def\l{\ifmmode\ell\else\polishlcross\fi}
\def\H{\ifmmode{H}\else\accent"07D\fi}
\newcommand{\full}{\{\zero,\unit\}}
\newcommand{\unitc}{M}
\newcommand{\unit}{\pmb{\infty}}
\newcommand{\zero}{\mathbf{0}}
\renewcommand{\setminus}{\mathbin{\mathpalette\rsetminusaux\relax}}
\newcommand\rsetminusaux[2]{\mspace{-4mu}
  \raisebox{\rsmraise{#1}\depth}{\rotatebox[origin=c]{-20}{$#1\smallsetminus$}}
 \mspace{-4mu}
}
\newcommand\rsmraise[1]{%
  \ifx#1\displaystyle .8\else
    \ifx#1\textstyle .8\else
      \ifx#1\scriptstyle .6\else
        .45%
      \fi
    \fi
  \fi}
\DeclareMathOperator{\id}{id}
\newcommand{\pid}[1]{\bm{\left\langle} #1 \right]}
\newcommand{\ppid}[1]{\bm{\left\langle} #1 \right)}
\newcommand{\Ind}{\BA}
\newcommand{\cDnt}{\cD_{\rm{nt}}}
\newcommand{\IM}{\bS}       
\newcommand{\IMc}{\bT}
\newcommand{\IMx}[2]{}
\newcommand{\ji}[2]{#1^{(\underline{#2})}}
\newcommand{\mi}[2]{#1^{(\overline{#2})}}
\newcommand{\ai}[2]{#1^{(#2)}}
\newcommand{\aia}{\ai\alpha{i}}
\newcommand{\ajb}{\ai\beta{j}}
\newcommand{\jis}{\ji{\alpha}{i}}
\newcommand{\jibp}{\ji{(\beta+1)}{j}}
\newcommand{\mis}{\mi{\beta}{j}}
\newcommand{\x}[1]{x^{(#1)}}
\newcommand{\gD}{D_\PC}
\newcommand{\gA}{A_\PC}
\newcommand{\rA}{\cA_{\rm{m}}}
\newcommand{\gK}{K_\PC}
\newcommand{\irrFam}{\cI}
\newcommand{\PC}{\cP}
\newcommand{\dC}{\cC^\infty}
\newcommand{\cupT}{\cT}
\newcommand{\CD}{\cC}
\newcommand{\II}[4]{[\ji{#3}{#1},\mi{#4}{#2}]}
\newcommand{\IIs}{\II{i}j\alpha\beta}
\newcommand{\JI}[1]{J(#1)}
\newcommand{\JId}[1]{J^\infty(#1)}
\begin{document}
\title{On the representations of finite distributive lattices}
\author{Mark Siggers}
\address{Kyungpook National University, Republic of Korea}
\email{mhsiggers@knu.ac.kr}
\thanks{Supported by the National Research Foundation (NRF) of Korea
  (2014-06060000)}
\keywords{Finite distributive lattice, Representation, Embedding, Product of chains} 
\subjclass[2010]{06D05}

\begin{abstract}
  A simple but elegant result of Rival states that every sublattice $L$ of 
  a finite distributive lattice $\PC$ can be constructed from $\PC$ by removing 
  a particular family $\irrFam_L$ of its irreducible intervals. 

  Applying this in the case that $\PC$ is a product of a finite set $\CD$ of chains,
  we get a one-to-one correspondence $L \mapsto \gD(L)$ between the sublattices
  of $\PC$ and the preorders spanned by a canonical sublattice $\dC$ of $\PC$.   

  We then show that $L$ is a tight sublattice of the product of chains $\PC$  
  if and only if $\gD(L)$ is asymmetric. This yields a one-to-one 
  correspondence between the tight sublattices of $\PC$ and the posets spanned
  by its poset $\JI{\PC}$ of non-zero join-irreducible elements.  

  With this we recover and extend,  among other classical results, the
  correspondence derived from
  results of Birkhoff and Dilworth, between the tight embeddings of a finite
  distributive lattice $L$ into products of chains, and the chain decompositions
  of its poset $\JI{L}$ of non-zero join-irreducible elements.  

\end{abstract} 


 \maketitle

 \section{Introduction}     
   All lattices considered in this paper are finite and distributive. 
   For very basic notation, definitions, and concepts we refer
   the reader to \cite{Gr99}; many basic definitions are also given 
   at the start of Section \ref{sect:background}.   
 
   Classical results of Birkhoff and Dilworth, which we review in more 
   detail in Section \ref{sect:background}, show that any finite distributive
   lattice $L$ can be embedded, as a sublattice, into a product of chains.
   They further yield a one-to-one correspondence between the tight such  embeddings
   (those that preserve covers, and the the zero and unit elements),
   and chain decompositions of the poset $\JI{L}$ of
   non-zero irreducible elements of $L$.  

   In this paper, we consider the following question:   
   
   \begin{quote} What happens when we reverse our point of view, and ask about the 
   various sublattices of a given products of chains $\PC$ rather than the
   various embeddings of a particular lattice $L$ into products of chains?
  \end{quote}

   Starting with the product $\PC = \prod_{C \in \CD}C$ of a
   set $\CD$ of chains, $\JI{\PC}$ is the parallel sum of the
   chains we get from
   the chains of $\CD$ by removing their minimum elements.
   One main idea is that, for a tight sublattice $L$ of $\PC$, $\JI{L}$
   is isomorphic to an extension of $\JI{\PC}$. 
   See Figure \ref{fig:RivEx1} for an example. 
   If we consider only tight sublattices of $\PC$, this
   is the whole picture (Corollary \ref{cor:RisJ2}): $L \mapsto \JI{L}$ gives a 
   one-to-one correspondence of the tight sublattices of $\PC$ 
   and the spanned poset extensions of $\JI{\PC}$--
   extensions with the same set of elements as $\JI{\PC}$.
 \begin{figure}
   \centering{
    \setlength{\unitlength}{1mm}%
   \begin{picture}(100,40)%
    \put(0,5){\includegraphics[scale=.50]{./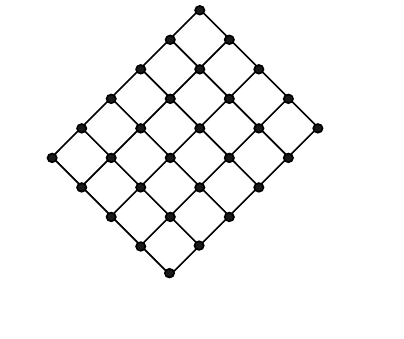}}%
    \put(13,6){$\PC$} 
    \put(30,8){\includegraphics[scale=.50]{./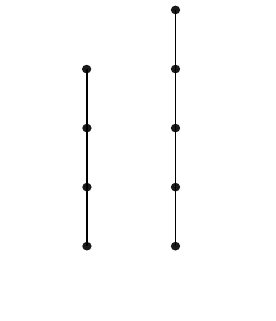}}%
    \put(37,6){$\JI{\PC}$}
    \put(57,5){\includegraphics[scale=.50]{./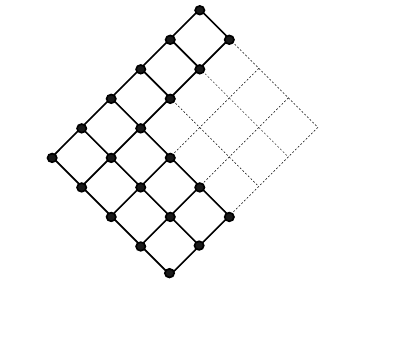}}%
    \put(70,5){$L$} 
    \put(82,8){\includegraphics[scale=.50]{./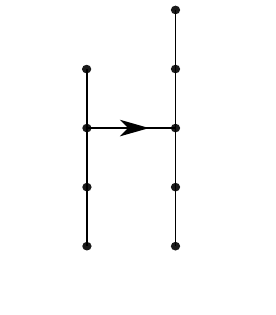}}%
    \put(90,5){$\JI{L}$}   
  \end{picture}
   \caption{ Product of chains $\PC$, sublattice $L$, and their posets 
             $\JI{\PC}$ and $\JI{L}$}
    \label{fig:RivEx1}  
   }
  \end{figure}
   But what if we consider non-tight embeddings / sublattices?

  As every lattice has a tight embedding into products of chains,
  non-tight embeddings are seldom considered.  
  However, in the paper \cite{Si15}, out of which this paper grew, 
  we found it useful to consider non-tight embeddings, or rather non-tight
  sublattices of products of chains. We needed a characterisation of {\bf all}
  sublattices of a product of chains.


  Using a result of Rival \cite{Ri74}
  which characterises the sublattices of $\PC$, we find  
  a correspondence $L \mapsto \gD(L)$ between the
  sublattices $L$ of $\PC$ and {\em preorders} (reflexive transitive
  relations) $\gD(L)$ extending $\JI{\PC}$.  
   To do this in full generality it is necessary to replace $\JI{\PC}$ with
   $\dC$, which we get from $\JI{\PC}$ by appending a zero and a unit.  
   Our main result yields Corollary \ref{cor:1to1}, a one-to-one
   correspondence between sublattices of $\PC$ and spanned preorder extensions of $\dC$. 
  
   We then characterise the tight sublattices $L$ 
   of $\PC$ as those for which $\gD(L)$ is a poset and so, after restricting
   back to the elements of $\JI{\PC}$, find that in this case $\gD(L)$ is $\JI{L}$.
   We further characterise 
   the so-called $\full$ and semi-direct sublattices of $\PC$ in terms of
   properties of $\gD$, and restricting our results to $\full$-sublattices,
   get, in Corollary \ref{cor:RisJ2} a one-to-one correspondence between the
   $\full$-sublattices of $\PC$ and the spanned preorder extensions of $\JI{\PC}$.

   Returning to the point of view of embeddings
   of a given lattice, we extend the classical correspondence between the tight
   embeddings of a lattice $L$ into products of chains and the 
   chain decompositions of $\JI{L}$. 
   We get, in Theorem \ref{thm:one2one5}, a one-to-one correspondence between
   embeddings of $L$ into
   products of chains, and surjective homomorphisms of `pointed unions of chains'
   to $\JId{L}$.

   In \cite{Ko83}, Koh used a clever construction to show that any distributive lattice $L$ can be represented as 
   the lattice $\rA(P)$ of maximal antichains of some poset $P$. 
   We finish off by showing that his construction arises naturally from our 
   point of view, and extend it to determine, in Corollary \ref{cor:antialldig}, 
   {\bf all} posets $P$ such that $L \iso \rA(P)$ for a given $L$.


  \section{Notation and Background}\label{sect:background}



  A lattice embedding $e: L \to L'$ is {\em a $\{\zero,\unit\}$-embedding}
  if it preserves the zero and unit elements: $e(\zero_L) = e(\unit_{L'})$
  and $e(\unit_L) = \unit_{L'}$.
  It is {\em tight} if it is a $\{\zero,\unit\}$-embedding and preserves covers:
  $x \prec y$ implies that $e(x) \prec e(y)$.

  An element $x$ of a lattice $L$ 
  is {\em join-irreducible}          
  if $x= a \vee b$ 
  implies that $x=a$ or $x = b$. Meet-irreducible elements are defined analogously.
  The set of non-zero join-irreducible elements of $L$ is denoted $\JI{L}$.
  It induces a subposet of $L$ which is also denoted by $\JI{L}$.

  For a subset $S$ of a lattice $L$, we let
  $\bigvee S = \bigvee_{x \in S} x$ be the join of the
  elements of $S$. We often write $\bigvee_L S$ to specify
  that the join takes place in $L$.   

  A subset $S$ of a poset is a {\em downset} or {\em ideal}
  if $x \in S$ and $y \leq x$ implies $y \in S$. 
  The minimum downset containing an element $x$ is denoted
  $\id x$.  

  A {\em chain} $C$ of length $n$ in a poset $P$ is subposet
  isomorphic to the linear order $Z_n$ on 
  the $n$ elements $\{0,1,\dots, n-1\}$.
  A {\em chain decomposition} of a poset $P$ is a partition of its elements
  into a family $\CD$ of chains $C_1, \dots, C_d$. 
  For a family $\CD = \{C_1, \dots, C_d\}$ of disjoint chains, the product   
   $\prod\CD := \prod_{i=1}^dC_i$ consists of all $d$-tuples $x = (x_1, \dots, x_d)$ 
  where $x_i \in C_i$ for each $i \in [d]$. It is ordered by $x \leq y$ if $x_i \leq y_i$ for 
  each $i$.

  \subsection{Embedding through Downsets}
 
  In \cite{Bi}, Birkhoff showed that $L \iso \cD(\JI{L})$ where $\cD(P)$, for a 
  poset $P$ is the family of downsets of $P$ ordered by
  inclusion.  Specifically, he showed the following. 
 
  \begin{theorem}\label{thm:birkEmb}\cite{Bi}
    The map $\IM:  x \mapsto \id x \cap \JI{L}$ 
   is an isomorphism of $L$ to $\cD(\JI{L})$. 
   Its inverse is  $S \mapsto \bigvee_L S$.
  \end{theorem}
  One can easily show that $P \iso \JI{\cD(P)}$ by observing that a downset in 
  $\cD(P)$ is join-irreducible if and only if it has a unique maximal element.  
  Thus Theorem \ref{thm:birkEmb} gives the following, which completes a one-to-one
  correspondence between finite posets and  finite distributive lattices.   
  \begin{corollary}\label{cor:revBirkEmb}
   If $\cD(P) \iso \cD(P')$ then $P \iso P'$. 
  \end{corollary}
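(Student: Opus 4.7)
The plan is to recover the poset $P$ intrinsically from the lattice $\cD(P)$ as the subposet of its join-irreducible elements, at which point the corollary follows immediately from Theorem \ref{thm:birkEmb} applied to $L = \cD(P)$.

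First I would identify the join-irreducibles of $\cD(P)$ as exactly the principal downsets $\downarrow\! p := \{q \in P \mid q \leq p\}$, one for each $p \in P$. Any downset $S \in \cD(P)$ decomposes as $S = \bigcup_{p \in \max(S)} \downarrow\! p$, and each $\downarrow\! p$ is itself a downset, so this union is in fact a join in $\cD(P)$. If $|\max(S)| \geq 2$ this exhibits $S$ as a nontrivial join, so $S$ is not join-irreducible. Conversely, if $\max(S) = \{p\}$ then $S = \downarrow\! p$, and whenever $\downarrow\! p = A \vee B = A \cup B$ in $\cD(P)$, the element $p$ lies in $A$ or $B$; the one containing $p$ must contain all of $\downarrow\! p$ and therefore equal $\downarrow\! p$. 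Hence the non-zero join-irreducibles are precisely $\{\downarrow\! p \mid p \in P\}$.

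Next I would observe that the map $\varphi_P \colon P \to J_{\cD(P)}$ defined by $p \mapsto \downarrow\! p$ is a bijection by the previous step, and that $p \leq q$ in $P$ if and only if $\downarrow\! p \subseteq \downarrow\! q$ in $\cD(P)$; since $J_{\cD(P)}$ carries the order inherited from $\cD(P)$, $\varphi_P$ is an isomorphism of posets. Now any lattice isomorphism $f \colon \cD(P) \to \cD(P')$ is defined purely by join, meet, and order, so it carries join-irreducibles to join-irreducibles and restricts to a poset isomorphism $J_{\cD(P)} \iso J_{\cD(P')}$. Composing,
\[
P \;\iso\; J_{\cD(P)} \;\iso\; J_{\cD(P')} \;\iso\; P',
\]
as required.

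There is no real obstacle; the only mildly delicate point is the characterization of join-irreducibles in $\cD(P)$, and even that reduces to the one-line observation that a downset is determined by, and can be assembled as the join of, its maximal elements. The corollary can thus be viewed as the statement that the assignment $P \mapsto \cD(P)$ has a left inverse $L \mapsto J_L$ on the level of isomorphism classes, completing Theorem \ref{thm:birkEmb} to a bijective correspondence.
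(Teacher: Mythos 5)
Your proof is correct and follows the same route the paper sketches: identifying the join-irreducible elements of $\cD(P)$ as the downsets with a unique maximal element (the principal downsets), so that $P \iso J_{\cD(P)}$, and then invoking Theorem \ref{thm:birkEmb}. You have simply filled in the details that the paper leaves as an observation.
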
 
  
   For a chain decomposition $\CD$ of a poset let $\CD_0$ be the family
   of chains we get from the chains in $\CD$ by adding a new
    minimum element  to each. 
   In \cite{Di50}, Dilworth proved the following embedding theorem. 
   \begin{theorem}\label{thm:dilEmb}\cite{Di50}
     For any chain decomposition $\CD$ of a poset $P$
      the map $S \mapsto \bigvee_\PC S$
    is an embedding of $\cD(P)$ into  $\PC = \prod \CD_0$. 
  \end{theorem}

  With Theorem \ref{thm:birkEmb}, this immediately gives the following.

  \begin{corollary}\label{cor:birkEmb}
  For any decomposition $\CD$
  of $\JI{L}$ into chains, the map  
      \[ e_\CD: L \to \prod \CD_0: x \mapsto \textstyle{\bigvee_\PC} \IM(x) \]
  is an embedding of $L$ into $\PC = \prod \CD_0$.  
  \end{corollary}

  In \cite{La98}, Larson makes explicit a converse to Corollary 
  \ref{cor:birkEmb}, showing essentially the following. 
 
  \begin{theorem}\label{thm:La}\cite{La98}
    The embeddings $e_\CD$ of Corollary \ref{cor:birkEmb} are tight,
    and for every tight embedding $e$ there  is a chain decomposition
    $\CD$ of $\JI{L}$ such that $e = e_{\CD}$.
  \end{theorem}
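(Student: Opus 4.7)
The plan is to prove the two directions separately. The forward direction is essentially reading off covers from the formula for $E_\cupC$ in Corollary~\ref{cor:birkEmb}; the reverse requires extracting a chain decomposition from an arbitrary tight embedding and verifying that it reproduces the map.

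For the forward direction, I would verify tightness of $E_\cupC(x) = (|S_x \cap C^*_1|, \dots, |S_x \cap C^*_d|)$ directly. Preservation of $\zero$ and $\one$ is immediate, since $S_\zero = \emptyset$ and $S_\one = J_L = C^*_1 \sqcup \dots \sqcup C^*_d$. For any cover $y \prec x$ in $L \iso \cD(J_L)$, one has $S_x = S_y \cup \{\alpha\}$ for a unique minimal element $\alpha$ of $J_L \setminus S_y$, and since $\alpha$ lies in exactly one $C^*_i$, the difference $E_\cupC(x) - E_\cupC(y)$ is the unit vector in coordinate $i$, hence a cover in $\PC$.

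For the converse, let $E : L \to \PC$ be tight. Each $\alpha \in J_L$ has a unique lower cover $\alpha^- \in L$, and by tightness $E(\alpha) - E(\alpha^-)$ is a unit vector in some unique coordinate $j$; set $\phi(\alpha) := j$ and $C^*_i := \phi^{-1}(i)$, giving a partition of $J_L$ into $d$ classes. The core identity to prove is $E(x)_i = |S_x \cap C^*_i|$ for all $x \in L$ and $i \in [d]$, by induction along a saturated chain from $\zero$ to $x$. The base case uses tight preservation of $\zero$; for the inductive step at a cover $y \prec x$ with $S_x = S_y \cup \{\alpha\}$, a short downset argument using the minimality of $\alpha$ in $J_L \setminus S_y$ shows that $y \wedge \alpha = \alpha^-$ and $y \vee \alpha = x$, so the lattice-homomorphism property of $E$ yields coordinatewise $E(x)_i - E(y)_i = E(\alpha)_i - E(\alpha^-)_i$, which equals $1$ exactly when $i = \phi(\alpha)$---matching the change in $|S_x \cap C^*_i|$. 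Evaluated at $x = \one$, this also forces $|C^*_i| = n_i$, so the codomain of $E_{\cupC_E}$ is indeed $\PC$.

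The main obstacle is showing that each $C^*_i$ is genuinely a chain in $J_L$, so that $\cupC^*_E := \{C^*_1,\dots,C^*_d\}$ qualifies as a chain decomposition. For this I would take distinct $\alpha, \beta \in C^*_i$ with (without loss of generality) $E(\alpha)_i \geq E(\beta)_i$, and set $x := \alpha \vee \beta$. The homomorphism property of $E$ gives $E(x)_i = E(\alpha)_i$, while the identity above gives $E(x)_i = |(S_\alpha \cup S_\beta) \cap C^*_i|$ and $E(\alpha)_i = |S_\alpha \cap C^*_i|$. Hence $S_\beta \cap C^*_i \subseteq S_\alpha \cap C^*_i$, and since $\beta \in S_\beta \cap C^*_i$ this forces $\beta \in S_\alpha$, i.e., $\beta \leq \alpha$ in $J_L$. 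Combining the identity with chain-ness yields $E = E_{\cupC_E}$, completing the proof.
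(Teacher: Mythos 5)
Your proof is correct, and it is worth noting that the paper does not actually prove Theorem~\ref{thm:La} at all: it is quoted from Larson's paper \cite{La98}, and the author only remarks afterwards how the decomposition $\cupC^*_E$ is implicitly extracted (namely by locating, for each $x\in J_L$, the coordinate in which $E(x)$ and $E(x')$ differ --- exactly your $\phi$). The paper instead re-derives and generalizes the statement much later through entirely different machinery: Rival's theorem writes $L=\PC-\cup\irrFam$, the digraph $\mR_\cupC(\irrFam)$ is built, Lemma~\ref{lem:Jacyclic}(i) shows tightness is equivalent to $\mR$ being acyclic, Corollary~\ref{cor:RisJ} then identifies $\mR^*_\cupC(\irrFam)$ with $J_L$, and Corollary~\ref{cor:one2one5} recovers the one-to-one correspondence. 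Your argument is the direct, self-contained route: the forward direction reads tightness off the formula for $E_\cupC$ via the Birkhoff isomorphism; the converse extracts $\phi$ from cover differences, proves the key identity $E(x)_i=|S_x\cap C^*_i|$ by induction up a saturated chain using the median-type relation $y\vee\alpha=x$, $y\wedge\alpha=\alpha^-$ (which correctly exploits that $E$ is a lattice homomorphism into a product of chains, so coordinates add across the cover), and then derives chain-ness of each $C^*_i$ from the identity applied to $\alpha\vee\beta$. All the individual steps check out, including the cardinality argument forcing $S_\beta\cap C^*_i\subseteq S_\alpha\cap C^*_i$ and hence comparability. What your approach buys is an elementary proof needing nothing beyond Birkhoff's representation; what the paper's approach buys is that the same machinery simultaneously handles non-tight embeddings, which is the point of the paper.
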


  It is a trivial observation that different chain decompositions of $\JI{L}$
  yield different embeddings of $L$ into products of chains, so with Corollary
  \ref{cor:birkEmb}, this yields the following correspondence. 
  \begin{corollary}\label{cor:La}
    There is a one-to-one correspondence between the chain decompositions
     of $\JI{L}$ and the tight embeddings of $L$ into products
    of chains. 
  \end{corollary}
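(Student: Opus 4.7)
The plan is to combine Theorem~\ref{thm:La} with a direct injectivity argument, as foreshadowed in the remark preceding the statement. Theorem~\ref{thm:La} already asserts both that each $E_{\cupC^*}$ is tight and that every tight embedding $E$ equals $E_{\cupC^*_E}$ for some chain decomposition $\cupC^*_E$ of $J_L$; hence $\cupC^* \mapsto E_{\cupC^*}$ is a well-defined surjection from chain decompositions of $J_L$ onto tight embeddings of $L$ into products of chains, and what remains is to verify injectivity.

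Suppose $\cupC^* = (C^*_1,\dots,C^*_d)$ and $\cupC'^* = (C'^*_1,\dots,C'^*_{d'})$ are chain decompositions of $J_L$ with $E_{\cupC^*} = E_{\cupC'^*}$. The common codomain forces $d = d'$ and $|C^*_i| = |C'^*_i|$ for each $i$. The key idea is that from the embedding one can read off which chain contains each $\alpha \in J_L$. To this end, set $\alpha^- := \bigvee\{\beta \in J_L : \beta < \alpha\}$, with the empty join read as $\zero_L$ when $\alpha$ is minimal in $J_L$. Since $\alpha$ is join-irreducible, $\alpha$ is the unique maximal element of $S_\alpha$, so $S_\alpha \setminus \{\alpha\}$ is still a downset of $J_L$, and by Theorem~\ref{thm:birkEmb} it coincides with $S_{\alpha^-}$. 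If $\alpha \in C^*_i$, then the coordinatewise formula for $E_{\cupC^*}$ yields
\[
E_{\cupC^*}(\alpha) - E_{\cupC^*}(\alpha^-) = \bigl(|S_\alpha \cap C^*_j| - |S_{\alpha^-} \cap C^*_j|\bigr)_{j=1}^d = e_i,
\]
the $i$th standard unit vector in $\PC$. The left-hand side depends only on the embedding $E_{\cupC^*}$ and on the lattice-theoretic datum $\alpha$, so the index $i$ is determined by $E_{\cupC^*}$. Running the same argument for $\cupC'^*$ gives $\alpha \in C'^*_i$, and so $C^*_i = C'^*_i$ for every $i$, which finishes injectivity.

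The main (and admittedly mild) obstacle is the identity $S_{\alpha^-} = S_\alpha \setminus \{\alpha\}$ for join-irreducible $\alpha$: this rests on the uniqueness of the maximal element of $S_\alpha$, which is precisely the join-irreducibility of $\alpha$, together with the Birkhoff correspondence of Theorem~\ref{thm:birkEmb}. Everything after that is routine bookkeeping with the defining formula of $E_{\cupC^*}$ from Corollary~\ref{cor:birkEmb}.
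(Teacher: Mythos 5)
Your proposal is correct and follows essentially the same route as the paper: Theorem~\ref{thm:La} supplies well-definedness and surjectivity of $\cupC^*\mapsto E_{\cupC^*}$, and injectivity is the ``trivial observation'' the paper invokes, which it justifies (in the paragraph following the corollary) by exactly the recovery you spell out --- reading off the chain containing $\alpha\in J_L$ from the coordinate in which $E(\alpha)$ and $E(\alpha^-)$ differ, where $\alpha^-$ is the unique element covered by $\alpha$. Your verification that $S_{\alpha^-}=S_\alpha\setminus\{\alpha\}$ and that the difference is the unit vector $e_i$ is a correct and complete write-up of that observation.
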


  \subsection{Embeddings through antichains}

 The correspondence taking a downset of a poset $P$ to its subset 
  of maximal elements gives one-to-one correspondence between the downsets
  $\cD(P)$ of $P$ and the antichains $\cA(P)$ of $P$.
  It is not hard to see that by defining the following ordering on
  $\cA(P)$, the correspondence can be extended to a lattice isomorphism:
  for antichains $I,I' \in \cA(P)$, set $I \leq I'$ if for all $a \in I$ there
  is some $a' \in I'$ such that $a \leq a'$.
  In \cite{Di60}, Dilworth showed the following, which is considerably harder.
  \begin{theorem}\label{thm:dilMaxAnt}\cite{Di60}
    The poset $\rA(P) \leq \cA(P)$ of all maximum sized antichains of a
    poset $P$ is a distributive lattice. 
  \end{theorem}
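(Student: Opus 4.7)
The plan is to show that $\cA_m(P)$ is a sublattice of $\cA(P)$. Since $\cA(P) \iso \cD(P)$ is a distributive lattice, this immediately gives the result; it suffices to verify that the join and meet (in $\cA(P)$) of two maximum antichains of $P$ are again maximum antichains.

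I would invoke Dilworth's Decomposition Theorem (Theorem \ref{thm:dilDec}) to fix a partition of $P$ into $w := w(P)$ chains $C_1,\dots,C_w$. Every maximum antichain has size $w$ and hence meets each $C_i$ in exactly one element. For maximum antichains $I,I'$ with $I\cap C_i=\{a_i\}$ and $I'\cap C_i=\{b_i\}$, the claim is that $I \vee I' = \{\max_{C_i}(a_i,b_i) : i\in[w]\}$ and $I \wedge I' = \{\min_{C_i}(a_i,b_i) : i\in[w]\}$. Each set has at most $w$ elements, so once they are verified to be antichains in $P$, they are automatically maximum antichains. That $\{\max(a_i,b_i)\}_i$ is an antichain follows by a short case analysis: if $\max(a_i,b_i)>\max(a_j,b_j)$ with $i\ne j$, the same-letter cases directly contradict $I$ or $I'$ being an antichain, while the cross case $\max(a_i,b_i)=a_i$, $\max(a_j,b_j)=b_j$ yields $a_i > b_j \geq a_j$, again contradicting $I$. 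Identifying $\{\max(a_i,b_i)\}_i$ with $I \vee I'$ in $\cA(P)$ is then straightforward: within the chain $C_i$ one has $\downarrow\max(a_i,b_i) = \downarrow a_i \cup \downarrow b_i$, and taking the union over $i$ yields the downset $D_I \cup D_{I'}$ corresponding to $I \vee I'$.

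The main obstacle is the dual identification for the meet, since $\bigcup_i(\downarrow a_i \cap \downarrow b_i)$ is not manifestly equal to $D_I \cap D_{I'}$. If $m$ is a maximal element of $D_I \cap D_{I'}$, then $m \leq a_i$ and $m \leq b_j$ for some $i$ and $j$, a priori distinct. Letting $C_k$ be the chain containing $m$, the key point is that $a_k \geq m$: for otherwise $a_k < m \leq a_i$ gives $a_k < a_i$ (whether or not $k=i$), contradicting that $I$ is an antichain. Similarly $b_k \geq m$, so $m \leq \min(a_k,b_k) \in D_I \cap D_{I'}$; maximality then forces $m = \min(a_k,b_k)$, completing the identification and hence the proof.
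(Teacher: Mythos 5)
Your proof is correct, but it takes a more elementary and self-contained route than the paper does. The paper never proves Theorem \ref{thm:dilMaxAnt} directly at all: it remarks that ``this proof takes some work'' and instead derives a generalization (that $\Ind_M(\mD)$ is a distributive lattice for any digraph $\mD$) as a corollary of its representation machinery in Section \ref{sec:antichains} --- reducing $\mD$ to an acyclic transitive digraph, applying Dilworth's decomposition theorem to get a tournament decomposition, and then invoking Corollary \ref{cor:RevEmbAntichain} (which rests on Theorems \ref{thm:mainIsoDownsets} and \ref{thm:duality}) to exhibit $\Ind_M(\mD)$ as a sublattice of a product of chains $\PC$. You instead work entirely inside $\cA(P)\iso\cD(P)$ and verify sublattice closure by hand: after fixing a Dilworth decomposition into $w=w(P)$ chains, you show the join and meet of two maximum antichains are the coordinatewise max and min along the chains, with the case analysis for antichain-ness and the maximal-element argument for $D_I\cap D_{I'}$ both checking out. (One small wording issue: your sets have \emph{exactly} $w$ elements, not merely ``at most $w$'' --- the chains are disjoint, so one element per chain gives $w$ distinct elements --- and exactness is what makes the antichain maximum.) The two approaches share the essential combinatorial content --- both hinge on Dilworth's Theorem \ref{thm:dilDec} and on the fact that a maximum antichain selects one element per chain, so that join and meet become coordinatewise operations --- but they package it differently: your argument buys a short, direct proof of the stated poset case with no auxiliary theory, while the paper's buys the stronger digraph statement and the explicit identification of $\cA_m(P)$ with a sublattice $\PC-\cup\irrFam$ of a product of chains, which it needs for the rest of its correspondence results.
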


  In \cite{Ko83}, Koh showed a converse:  every finite distributive lattice $L$ is 
  $\rA(P)$ for some lattice $P$. In doing so he showed something stronger. 
  Koh defines a construction $(\JI{L},\CD) \mapsto P_\CD$ which yields a poset $P_\CD$ for every
  chain decomposition
  $\CD$ of the poset $\JI{L}$.
  He then showed the following. 

  \begin{theorem}\label{thm:koh}\cite{Ko83}
     For every finite distributive lattice $L$, and every chain
     decomposition $\CD$ of $\JI{L}$, $L \iso \rA(P_\CD)$.
  \end{theorem}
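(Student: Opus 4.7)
The plan is to construct $P_\cupC$ explicitly from the pair $(J_L, \cupC^*)$ and then exhibit a lattice isomorphism $\Phi: L \to \cA_m(P_\cupC)$ via Birkhoff's identification $L \iso \cD(J_L)$. Writing each chain of the decomposition as $C_i^* = \{c_{i,1} < c_{i,2} < \cdots < c_{i,n_i}\}$ in the order induced from $J_L$, I would build $P_\cupC$ on $d$ disjoint chains $\tilde C_1, \dots, \tilde C_d$, where $\tilde C_i$ extends $C_i^*$ by padding elements (top and bottom) so that all $\tilde C_i$ have a common total length $N$, large enough that no single chain is exhausted before a size-$d$ antichain can be formed. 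Cross-chain comparabilities would then be imposed by transporting the order of $J_L$: declare $c_{i,j} < c_{i',j'}$ in $P_\cupC$ whenever the same relation holds in $J_L$, and add no cross-chain relations involving padding elements beyond what is forced by transitivity.

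Next, I would define $\Phi$ by sending $x \in L$ to the antichain whose $i$-th element lies at level $|S_x \cap C_i^*|$ in $\tilde C_i$. Using Corollary \ref{cor:birkEmb}, this is precisely the tuple $E_\CD(x) = (|S_x \cap C_1^*|, \dots, |S_x \cap C_d^*|)$ reinterpreted as a choice of one element per chain. The verification would then proceed in five steps: (a) $P_\cupC$ has width exactly $d$, with the upper bound from the explicit decomposition $\{\tilde C_i\}$ and the lower bound by exhibiting, for instance, $\Phi(\one_L)$; (b) $\Phi$ is well-defined, because whenever $c_{i,j} \leq c_{i',j'}$ in $J_L$ the downset condition forces $|S_x \cap C_{i'}^*| \geq j'$ whenever $|S_x \cap C_i^*| \geq j$, so the chosen levels are pairwise incomparable in $P_\cupC$; (c) injectivity, since the coordinate tuple $E_\CD(x)$ determines $S_x$ and hence $x$; (d) surjectivity, since a size-$d$ antichain of $P_\cupC$ must pick one element per $\tilde C_i$, and the cross-chain relations force the chosen levels to come from an actual downset of $J_L$; and (e) order preservation, which together with Theorem \ref{thm:dilMaxAnt} and equality of cardinalities upgrades $\Phi$ from an order-preserving bijection to a lattice isomorphism.

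The hardest step is (d): ensuring the cross-chain relations in $P_\cupC$ are exactly strong enough to rule out ``spurious'' max antichains corresponding to tuples of $\PC \setminus E_\CD(L)$, while not being so strong as to shrink the width below $d$ or block any legitimate antichain coming from a downset. This is where the choice of padding elements is delicate, and where one must use that the decomposition $\cupC^*$ exhausts $J_L$, so that every comparability of $J_L$ is witnessed as a cross-chain comparability of $P_\cupC$. Once (d) is in hand, the remaining verifications are routine, and the theorem follows.
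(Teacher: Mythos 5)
Your construction of $P_\cupC$ does not work as stated, and the failure is exactly at the step you flag as delicate. There are two problems. First, padding every chain at top and bottom to a common length changes the lattice: already for $L = Z_2\times Z_1$, where $J_L$ is a two-element chain plus an isolated point and there are no cross-chain comparabilities at all, two disjoint equalized chains of size $m$ have $m^2$ maximum antichains, never $6=|L|$. The correct amount of padding is exactly one new bottom element per chain and nothing at the top (these are the vertices $0e_i$ in the paper's digraph $\mD_\cupC(\irrFam)$). Second, and more seriously, transporting the order of $J_L$ directly (``$c_{i,j} < c_{i',j'}$ in $P_\cupC$ whenever this holds in $J_L$'') is off by one. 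Take $J_L = \{a<b,\ a<c\}$ with $C_1^* = \{a<b\}$ and $C_2^* = \{c\}$; then $L = \cD(J_L)$ has five elements, the downset $\{a,c\}$ has coordinate tuple $(1,1)$, and your $\Phi$ sends it to $\{a,c\}$ --- which under your transported relation $a<c$ is not an antichain, so $\Phi$ is not even well defined, and indeed $|\cA_m(P_\cupC)| = 4 \neq 5$. (Your step (b) also has the implication reversed: a downset containing the \emph{larger} of two comparable elements must contain the smaller, not vice versa.) The comparability that must be imposed when $c_{i,j} \leq c_{i',j'}$ in $J_L$ is that level $j'$ of chain $i'$ lies above level $j-1$ of chain $i$; in the example, $c$ must be placed above the new bottom element of $\tilde C_1$, not above $a$, so that $\{a,c\}$ survives as an antichain while the spurious $\{0_1,c\}$ (the missing lattice point $(0,1)$) is killed. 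This successor shift is the entire content of Koh's construction and of the map $K_\cupC$ in Definition \ref{def:constK}; without it the theorem is false for your $P_\cupC$.

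For comparison: the paper does not reprove Theorem \ref{thm:koh} directly (it is quoted from \cite{Ko83}), but its machinery recovers and generalizes it along lines close to your outline. One takes the tight embedding $E_\cupC$ of Corollary \ref{cor:birkEmb}, writes $L = \PC - \cup\irrFam$ with $\irrFam$ closed via Theorem \ref{thm:rival}, and forms $\mD = \mD_\cupC(\irrFam)$, whose extra arcs $\alpha e_i \to \beta e_j$ record the removed intervals $\IIs$ and automatically carry the shift described above. Theorem \ref{thm:mainIsoAntichain} gives $L \iso \Ind_d(\mD)$, and Lemma \ref{lem:Jacyclic} shows that tightness of the embedding makes $\mD$ strongly acyclic, hence a poset after transitive closure, on which $\Ind_d$ coincides with $\cA_m$. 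If you replace your construction of $P_\cupC$ by this one, your verification outline (a)--(e) goes through essentially as written.
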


  \subsection{Rival's theorem}

   An {\em irreducible interval} of a lattice $L$ is the set
      $[\alpha,\beta] = \{ x \in L \mid \alpha \leq x \leq \beta \}$
  for any join-irreducible element $\alpha$ of $L$ and any 
  meet-irreducible element $\beta$.
  For a set $\irrFam$ of irreducible intervals of $L$ we let 
  $\bigcup \irrFam = \bigcup_{I \in \irrFam} I$.
  The set $\irrFam$ is  {\em closed} if $I \subseteq \bigcup\irrFam$
   implies $I \in \irrFam$.   
   The key to our results
   is the following theorem of Rival. 

  \begin{theorem}\label{thm:rival} \cite{Ri74}
    If $L$ is a sublattice of a lattice $L'$ of finite length, then 
            \[ L = L' \setminus  \bigcup \irrFam \]
    for some (closed) family $\irrFam$ of irreducible intervals of $L'$.
  \end{theorem}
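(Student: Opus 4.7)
The plan is to exhibit, for each $x \in L' - L$, a canonical irreducible interval of $L$ containing $x$, then let $\irrFam$ be the collection of all such intervals, and close it under the subinterval relation at the end.

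For $x \in L' - L$, define
\[ \alpha_x = \bigvee \{y \in L : y \leq x\} \quad \text{and} \quad \beta_x = \bigwedge \{y \in L : y \geq x\}, \]
with the join and meet taken in $L$. Because $L$ is a sublattice of $L'$, these agree with the corresponding operations in $L'$, so $\alpha_x, \beta_x \in L$ and $\alpha_x \leq x \leq \beta_x$ in $L'$; as $x \notin L$, both inequalities are strict.

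The main step, and what I expect to be the main obstacle, is to show that $\alpha_x \in J_L^+$ and, dually, that $\beta_x \in M_L^+$. Assume $\alpha_x \neq \zero$ and $\alpha_x = a \vee b$ in $L$ with $a, b < \alpha_x$. I would then try to use the sublattice hypothesis together with the finite length of $L'$ to locate some $z \in L$ with $\alpha_x < z \leq x$, contradicting the maximality of $\alpha_x$ among elements of $L$ bounded above by $x$. Since $L'$ is not assumed distributive, the argument cannot proceed by an algebraic identity alone; instead one lifts a cover in a maximal chain from $\alpha_x$ up to $x$ in $L'$ and uses the sublattice property to drag this cover back into $L$, producing the forbidden $z$.

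Once the key claim is established, $[\alpha_x, \beta_x]$ is an irreducible interval of $L$ containing $x$, so $\irrFam_0 = \{[\alpha_x, \beta_x] : x \in L' - L\}$ covers $L' - L$. The choice of $\alpha_x$ and $\beta_x$ as the nearest elements of $L$ below and above $x$ is precisely what prevents these intervals from spuriously intersecting $L$, so $L = L' - \cup \irrFam_0$. Finally, adjoin to $\irrFam_0$ every irreducible interval of $L$ already contained in $\cup \irrFam_0$ to obtain a closed family $\irrFam$; this enlargement does not change the union, so the identity $L = L' - \cup \irrFam$ is preserved.
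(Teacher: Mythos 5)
The paper does not prove this theorem; it is quoted from Rival \cite{Ri74}, so there is no internal proof to compare against. Your proposal, however, is not correct, and the failure is structural rather than the fixable gap you flag in your ``main step.'' The elements $\alpha_x = \bigvee\{y\in L : y\le x\}$ and $\beta_x = \bigwedge\{y\in L : y\ge x\}$ lie in $L$, as you yourself note, so the interval $[\alpha_x,\beta_x]$ always meets $L$ --- it contains its own endpoints --- and $L'-\cup\irrFam_0$ is then a \emph{proper} subset of $L$. The claim that taking the nearest elements of $L$ ``prevents these intervals from spuriously intersecting $L$'' is exactly backwards. Concretely: let $L'=Z_1\times Z_1$ and $L=L'-\{(0,1)\}$, a three-element chain and hence a sublattice. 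For $x=(0,1)$ your recipe gives $\alpha_x=(0,0)$ and $\beta_x=(1,1)$, so $[\alpha_x,\beta_x]=L'$ and you delete everything; the interval the theorem wants is the singleton $[(0,1),(0,1)]$. Note also that the endpoints of the removed intervals must be irreducible in the \emph{ambient} lattice $L'$, not in $L$: that is how the theorem is applied throughout the paper, where the intervals $\IIs$ removed from $\PC$ have endpoints in $J_\PC^+$ and $M_\PC^+$ that typically do not belong to the sublattice at all (the ``of $L$'' in the statement should be read as ``of $L'$'').

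The standard argument runs in the opposite direction from yours. For $x\in L'-L$, use finite length to choose an interval $[a,b]$ of $L'$ containing $x$ that is \emph{maximal} subject to $[a,b]\cap L=\emptyset$. If $a$ were join-reducible, say $a=c\vee d$ with $c,d<a$, then maximality applied to the strictly larger intervals $[c,b]$ and $[d,b]$ yields $y_1,y_2\in L$ with $c\le y_1\le b$ and $d\le y_2\le b$; since $L$ is a sublattice, $y_1\vee y_2\in L$, and $a=c\vee d\le y_1\vee y_2\le b$ contradicts $[a,b]\cap L=\emptyset$. Dually $b$ is meet-irreducible. Thus $a\in J_{L'}^+$, $b\in M_{L'}^+$, and $[a,b]$ is an irreducible interval of $L'$ containing $x$ and disjoint from $L$; the union of these intervals over all $x\in L'-L$ is exactly $L'-L$. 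Here the sublattice hypothesis is used precisely once, to form $y_1\vee y_2$, and no lifting of covers is needed. Your final step --- adjoining every irreducible interval already contained in the union, which leaves the union unchanged --- is correct and suffices for the closure assertion.
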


  \subsection{Non-trivial Downsets}

  For a poset $P$ let $P_\zero$ be the poset we get from $P$ by adding a new
  minimum element $\zero$ and let $P^{\unit}$ be the poset we get from $P_\zero$
  by adding a new maximum element $\unit$. 
  A downset of a poset $P$ is {\em non-trivial} if it is a proper non-empty subset of $P$.  
  The following  has  been observed in several places,
  (see \cite{La98}.) 
 
  \begin{fact}\label{fact:nonEmptyD}
   Where $\cD_{\rm ne}(P)$ is the poset of {\em non-empty} downsets of a poset $P$,
   $\cD(P) \iso \cD_{\rm ne}(P_\zero)$. Where $\cDnt(P)$ is the poset of 
  {\em non-trivial} downsets of $P$,
    $\cD(P) \iso \cDnt(P^{\unit})$.
  \end{fact}

  As the construction $P \mapsto P^{\unit}$ is clearly invertible, this fact, with 
  Corollary~\ref{cor:revBirkEmb}, gives the following.
  \begin{corollary}\label{cor:revBirkEmbEx} 
   For posets $P$ and $P'$ with maximum and minimum elements, 
   if $\cDnt(P) \iso \cDnt(P')$ then $P \iso P'$. 
  \end{corollary}

  \section{Setup, and the definition of $\gD(L)$}

 For the rest of the paper $\PC$ is always a product  $\PC = \prod\CD$ of
 finite chains $\CD = \{C_1, \dots, C_d\}$. When $L$ is a sublattice of $\PC$,
 Theorem \ref{thm:rival} yields a unique closed family $\irrFam_L$ such that
 $L = \CD \setminus \bigcup \irrFam_L$.
  The join irreducible elements of $\PC$ are clearly of the form 
    \[ \ji{\alpha}{i} := (\underbrace{0,0,\dots,0,\alpha}_{i},0,0,\dots,0) \]
  for $i \in [d]$ and $\alpha \in C_i$ and the meet-irreducible elements
  are 
   \[ \mi\beta{j}  := (\unitc_1,\unitc_2,\dots,\unitc_{j-1},\beta,\unitc_{j+1}, \dots,\unitc_d) \]
  for $j \in [d]$ and $\beta \in C_j$, where $\unitc_i$ is the maximum elemement of $C_i$.
  The irreducible intervals of $\PC$ are exactly the intervals
  $\IIs$
  for $i,j \in [d]$, $\alpha \in C_i$, and $\beta \in C_j$. 
  Note that $\ji0i = \zero$ for all $i$, and $\mi\infty{j} = \unit$ for all $j$.
  Given $\CD$ and $\PC = \prod\CD$, let $\dC = \JId{\PC}$ be the subposet of $\PC$ induced
  by $\JI{\PC} \cup \{\zero, \unit\}$. 
  As $\unit$ is the element of $\PC$ above $\ji{\unitc_i}{i}$ and $\zero$ is the 
  element below $\mi{0}{j}$, it seems reasonable to let 
  $\ji{(\unitc_i+1)}{i} = \unit$ and $\mi{(-1)}{j} = \zero$. We do this occasionally to 
  make definitions and proofs more compact.

  
 Recall that a poset, and in particular $\dC$, is a {\em reflexive digraph}:
  a reflexive relation.
  We view posets as reflexive digraphs,  saying $(x,y)$ is an {\em arc}, or writing
  $x \to y$,  to mean $x \leq y$.
  A {\em spanned extension} of $\dC$ is any digraph on the elements of $\dC$,
  which contains it. A {\em preorder} is a transitive reflexive digraph.

    \begin{definition}[$\gD(\irrFam)$ and $\gD(L)$] 
     \label{const:D}
       For any family $\CD = \{C_1, \dots, C_d\}$ of chains and any
       family of irreducible intervals $\irrFam$ of $\PC = \prod \CD$,
       let 
        $\gD(\irrFam)$ be
       the spanned extension of $\dC$ that we get from $\dC$ by letting
       $\jibp \to \jis$ for each $\IIs \in \irrFam$.
       For a sublattice $L$ of $\PC$, let   $\gD(L) = \gD(\irrFam_L)$.
    \end{definition} 

  In Proposition \ref{prop:Rtrans} we will show that because 
  $\irrFam_L$ is closed, $\gD(L)$ is a preorder.
  The Hasse diagram is not uniquely defined for a preorder, as it is for a poset,
  but can still be used to depict one-- the preorder is still the transitive
  closure of the Hasse diagram.  We therefore depict $\gD(L)$ with a Hasse diagram.  
      
   The top half of Figure \ref{fig:RivEx2} shows the same lattices
   $\PC = Z_5 \times Z_6$ and
   $L_1 = \PC \setminus \II2132$ as are shown in Figure \ref{fig:RivEx1}, 
   but now, instead of $\JI{\PC}$ and $\JI{L_1}$, it shows with them the digraph
   $\dC$ and its spanned extension $\gD(L_1)$. 

 \begin{figure}
   \centering{
    \setlength{\unitlength}{.9mm}%
   \begin{picture}(110,50)%
    \put(0,5){\includegraphics[scale=.63]{./LatticeP.pdf}}%
    \put(10,3){$\PC = Z_5 \times Z_6$} 
    \put(11.5,9){$(0,0) = \zero$}
    \put(-14,25){$(4,0) = \ji41$}
    \put(39,30){$(0,5) = \ji52$}
    \put(20,46){$(4,5) = \unit$}
    \put(75,8){\includegraphics[scale=.63]{./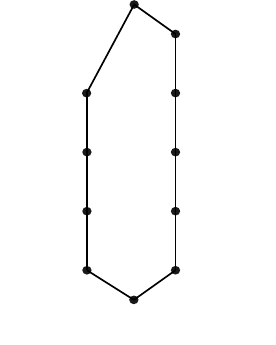}}%
    \put(87,3){$\dC$} 
    \put(79,15){$\ji11$}
    \put(97,15){$\ji12$}
    \put(79,36){$\ji41$}
    \put(97,45){$\ji52$}
    \put(89,50){$\unit$}
    \put(90,10){$\zero$}
  \end{picture}

  \begin{picture}(110,55)%
   \put(2,5){\includegraphics[scale=.63]{./LatticeL1.pdf}}%
    \put(8,3){$L_1 = \PC \setminus \II2132$} 
    \put(20,9){$\zero$}
    \put(1,25){$\ji41$}  
    \put(34,22){$\ot \ji32$}
    \put(34,36){$\ot \mi21$}
    \put(23,46){$\unit$}
    \put(75,8){\includegraphics[scale=.63]{./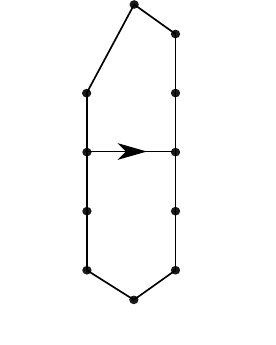}}%
    \put(83,3){$\gD(L_1)$} 
    \put(79,15){$\ji11$}
    \put(97,15){$\ji12$}
    \put(79,31){$\ji31$}
    \put(97,31){$\ji32$}
    \put(89,50){$\unit$}
    \put(90,10){$\zero$}
    
  \end{picture}%

  \begin{picture}(110,55)%
   \put(2,5){\includegraphics[scale=.63]{./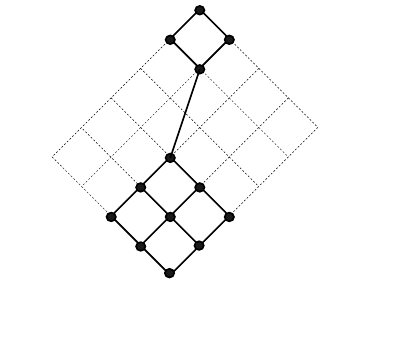}}%
    \put(8,3){$L_2 = L_1 \setminus \II1233$} 
    \put(20,9){$\zero$}
    
    \put(7,36){$\mi32 \to$}
    \put(0,22){$\ji31 \to$}
    \put(34,22){$\ot \ji32$}
    \put(34,36){$\ot \mi21$}
    \put(23,46){$\unit$}
    \put(75,8){\includegraphics[scale=.63]{./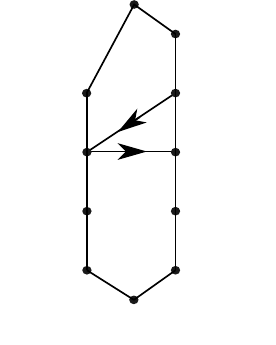}}%
    \put(83,3){$\gD(L_2)$} 
    \put(79,31){$\ji31$}
    \put(97,15){$\ji12$}
    \put(97,38){$\ji42$}
    \put(89,50){$\unit$}
    \put(90,10){$\zero$}
    
  \end{picture}%

  \begin{picture}(110,55)%
   \put(2,5){\includegraphics[scale=.63]{./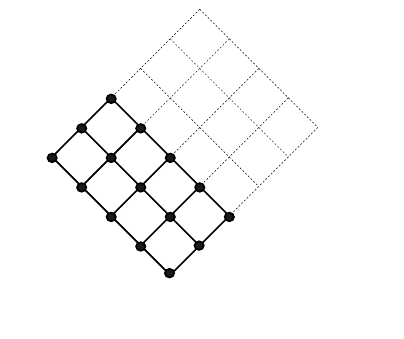}}%
    \put(8,3){$L_3 = \PC \setminus \II2134$} 
    \put(20,9){$\zero$}
    \put(1,25){$\ji41$}  
    \put(34,22){$\ot \ji32$}
    \put(27,43){$\ot \mi41 = \unit$}
    \put(75,8){\includegraphics[scale=.63]{./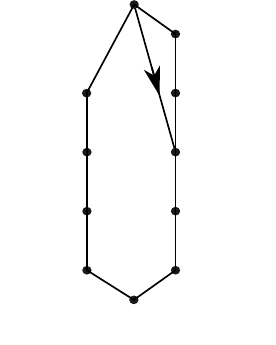}}%
    \put(83,3){$\gD(L_3)$} 
    \put(80,15){$\ji11$}
    \put(97,15){$\ji12$}
    \put(80,36){$\ji41$}
    \put(97,45){$\ji52$}
    \put(89,50){$\unit$}
    \put(90,10){$\zero$}
    
  \end{picture}%
  }
   \caption{ Sublattices of $\PC = Z_5 \times Z_6$ and their digraphs $\gD$}
    \label{fig:RivEx2}  
  \end{figure}



  An {\em $(x,y)$-path} $v_1 \to v_2 \to \dots \to v_\l$  in a digraph $D$
   is a sequence of elements 
   $x = v_1, v_2, \dots, v_\l = y$ such that $v_i \to v_{i+1}$ for each $i$. 
  The following is a straight-forward extension of the poset definition of 
  downset to digraphs.

  \begin{definition}\label{def:termSet}
   A subset $S \subseteq D$ of vertices of a digraph $D$ is a {\em downset}
     (or an {\em initial set}) if for all $y$ in $S$ and all $(x,y)$-paths in $D$, 
     $x$ is also in $S$.
     A downset set $S$ of $D$ is a {\em non-trivial downset} if it is 
     a non-empty proper subset of $D$. 
     For a vertex $x$ of $D$ let $\id x$ be the smallest downset of $D$ containing $x$. 
   \end{definition}

   \begin{definition}[$\cD(D)$ and $\cDnt(D)$]\label{def:Term}
     For any digraph $D$, let $\cD(D)$ be the family of downsets of $D$ ordered by inclusion,
     and $\cDnt(D)$ be the subfamily of non-trivial downsets.   
   \end{definition}


  The following simple case of our main theorem is
   clear from the top example in Figure \ref{fig:RivEx2}, and indeed, as $\PC$
   is a lattice and $\dC = \JId{L}$, it is essentially just a simple case of
   Theorem \ref{thm:birkEmb}, 
   using Fact \ref{fact:nonEmptyD}.

   \begin{lemma}\label{lem:baseTermSetIso}
      Where $\PC$ is a product of chains, the map    
         \[ \IMc: \PC \to \cDnt(\dC) : x \mapsto \id x \cap \dC \]
      is an isomorphism. 
     The inverse is $S \mapsto \bigvee_\PC S$.  
   \end{lemma}  

  Before we prove our main theorem,  that $\IMc$ is an 
  isomorphism from $L$ to $\cDnt(\gD(L))$ for {\bf any} sublattice of $\PC$,  
  we motivate the definition of $\gD(L)$ with a couple of examples.  
   It arises from observing what we must do to $\dC$ so that 
   $\IMc$ remains an isomorphism when we remove some irreducible
   interval $\IIs$ from $\PC$. 

   Referring to the second example in Figure \ref{fig:RivEx2}, when we remove $\II2132$ from 
   $\PC$, we see that we should add the $\ji32 \geq \ji31$ to $\dC$
   to maintain the isomorphism. Indeed, this ensures that, for example, the set 
  $\IMc((2,3)) = \{\ji21, \ji11, \zero, \ji12, \ji22, \ji32\}$ is no longer
  a downset.  In general it `kills' any downset $S$ containing $\ji32$
  but not $\ji31$. These are exactly the downsets $\IMc(x)$ for $x \in \II{2}{1}{3}{2}$.

  The third example in the figure shows a non-tight sublattice $L_2$ of $\PC$,
 observe that the corresponding  extension $\gD(L_2)$ of $\dC$
 is not a poset. The fourth example exhibits the necessity of $\zero$ and  $\unit$
 in $\gD$.

  \subsection{The closure of $\irrFam$ and transitivity of $\gD(\irrFam)$} 
  Observe that irreducible intervals of $\PC = \prod \CD$ of the
  form $[\ji\alpha{i}, \mi\beta{i}]$, where $\beta < \alpha$, are empty.
  By definition they are contained in any closed family $\irrFam$ of irreducible
  intervals. This corresponds to $\gD(\irrFam)$ containing $\dC$.

  \begin{lemma}\label{lem:tech}
    Let $D = \gD(\irrFam)$ for some sublattice $L = \PC \setminus \bigcup \irrFam$ of $\PC = \prod \CD$. 
    The following are equivalent for  
    $\alpha \in C_i \in \CD$ and  $\beta \in C_j \in \CD$. 
     \begin{enumerate}
       \item  $\IIs \subseteq \bigcup \irrFam$.
       \item  For all $x \in L$, $x_i \geq \alpha \Rightarrow x_j > \beta$.
       \item  There is a $(\jibp,\jis)$-path in $D$.   
     \end{enumerate}
   
  \end{lemma}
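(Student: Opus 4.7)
The plan is to verify (i) $\iff$ (ii) directly from the definition of $L$, establish (iii) $\Rightarrow$ (ii) by induction on path length, and then prove the substantive direction (i) $\Rightarrow$ (iii) by a contradiction argument that constructs an extremal witness in $\IIs$.  For (i) $\iff$ (ii), one simply unfolds: $\cup \irrFam = \PC \setminus L$, so $\IIs \subset \cup \irrFam$ is the same as $\IIs \cap L = \emptyset$, which is precisely (ii).

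For (iii) $\Rightarrow$ (ii), I would induct on the length $n$ of a path $\alpha e_i = v_0 \to v_1 \to \cdots \to v_n = (\beta+1) e_j$ in $\mR$.  The base case $n=0$ forces $i = j$ and $\alpha = \beta+1$, which gives (ii) immediately.  For the inductive step, the first arc $v_0 \to v_1$ is either a chain arc of $\cupC^\infty$, yielding $v_1 = \gamma e_i$ with $\gamma \leq \alpha$ so that $x_i \geq \alpha$ implies $x_i \geq \gamma$, or a ``new'' arc coming from some $\II{i}{k}{\alpha}{\delta} \in \irrFam$, in which case (i) $\iff$ (ii) applied to the quadruple $(i,k,\alpha,\delta)$ (using the trivial inclusion $\II{i}{k}{\alpha}{\delta} \subset \cup\irrFam$) gives $x_i \geq \alpha \Rightarrow x_k \geq \delta+1$.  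In either case the inductive hypothesis on the shorter path from $v_1$ closes the argument.

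For (i) $\Rightarrow$ (iii), suppose for contradiction that $\IIs \subset \cup \irrFam$ yet no $(\alpha e_i, (\beta+1) e_j)$-path exists in $\mR$, and let $U$ be the set of vertices reachable from $\alpha e_i$.  Since $\mR \supseteq \cupC^\infty$, each $U \cap C_k^\infty$ is a downward-closed initial segment, so there is a maximum height $\gamma_k$ with $\gamma_k e_k \in U$; the element $0$ lies in $U$ via a chain arc from $\alpha e_i$, so $\gamma_k \geq 0$, and $\gamma_k = n_k{+}1$ would place $\infty$ and hence $(\beta+1) e_j$ into $U$, a contradiction.  Thus $z = (\gamma_1, \ldots, \gamma_d)$ lies in $\PC$, with $z_i \geq \alpha$ and $z_j \leq \beta$; that is, $z \in \IIs \subset \cup\irrFam$.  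Picking $\II{k}{l}{\alpha'}{\beta'} \in \irrFam$ containing $z$, the inequality $\alpha' \leq z_k = \gamma_k$ puts $\alpha' e_k$ into $U$, so the new arc $\alpha' e_k \to (\beta'+1) e_l$ in $\mR$ places $(\beta'+1) e_l$ in $U$, forcing $\gamma_l \geq \beta' + 1$ and contradicting $\gamma_l = z_l \leq \beta'$.

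The hard part will be the last direction: one must extract from the failure of reachability a single element of $\IIs$ witnessing membership in $\cup\irrFam$.  The key idea is that defining $z_k$ as the maximum height reachable in chain $C_k^\infty$ simultaneously forces $z \in \IIs$ (through the coordinates $i$ and $j$) and ensures that the arc supplied by whichever interval of $\irrFam$ covers $z$ immediately contradicts the maximality of $\gamma_l$ in some other coordinate.
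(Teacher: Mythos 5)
Your proof is correct and takes essentially the same approach as the paper: the unfolding for (i)$\iff$(ii), the chaining of interval-implications along the arcs of the path for (iii)$\Rightarrow$(ii), and the extremal witness built from the maximal reachable height in each chain are all exactly the paper's argument (the paper merely phrases the last step as the contrapositive $\neg$(iii)$\Rightarrow\neg$(ii) rather than as a contradiction from (i)).
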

  
      \begin{proof}
       The equivalence of (i) and (ii) is immediate as both are 
       equivalent to the statement that
       there are no elements $x \in L$ with  $\alpha \leq x_i$ and
       $x_j \leq \beta$. So we show the equivalence of (ii) and (iii). 
 
       On the one hand, assume that 
   \[\ji{(\beta+1)}{s_\l} = \ji{\alpha_{s_\l}}{s_\l} \to \dots \to \ji{\alpha_{s_2}}{s_2}\to\ji{\alpha_{s_1}}{s_1} \]
       is an $(\ji{(\beta+1)}{s_\l},\ji{\alpha_{s_1}}{s_1})$-path in $D$. 
       By definition of $D = \gD$ this means  that 
       $\II{{s_i}}{{s_{i+1}}}{{\alpha_{s_i}}}{(\alpha_{s_{i+1}} - 1)} \in \irrFam$ for each $i$.
        Let $x \in L$ be such that $x_{s_1} \geq \alpha_{s_1}$.
        By the equivalence of (i) and (ii) this implies 
        $x_{s_2} > \alpha_{s_2} - 1$, which in turn implies  
        implies $x_{s_3} > \alpha_{s_3} - 1$, etc., until we get that 
        $x_{s_\l} > \alpha_{s_\l} - 1 = \beta$, as needed. 

       On the other hand, assume that there is no such path 
       from $\ji{\alpha_{s_\l}}{s_\l}$ to  $\ji{\alpha_{s_1}}{s_1}$. 
       We will find $x$ in $L$ with 
       $x_{s_1} \geq \alpha_{s_1}$ and $x_{s_\l} < \alpha_{s_\l} = \beta +1$.
       Indeed, for $k \in [d]$ let $x_k$ be the maximum value for which 
       there is a  path in $D$ from $\ji{x_{k}}{k}$ to $\ji{\alpha_{s_1}}{s_1}$.
       If no such path exists, let $x_k$ be $0$. 
       Clearly $x_{s_1} \geq \alpha_{s_1}$, 
       and by the assumption
       that there is no path from $\ji{\alpha_{s_\l}}{s_\l}$ to $\ji{\alpha_{s_1}}{s_1}$
       we have 
       that $x_{s_\l} < \alpha_{s_\l}$.
       We have just to show that $x$ is in $L$. 

       If $x$ is not in $L$, then it is in some 
       $\II{i}{j}{\alpha_i}{\beta_j} \in \irrFam$. 
       As $\II{i}{j}{\alpha_i}{\beta_j} \in \irrFam$, we have that  
       $\ji{(\beta_j + 1)}{j} \to \ji{\alpha_{i}}{i}$ is in $D$.
       As $x \in \II{i}{j}{\alpha_i}{\beta_j}$, we have that $\alpha_i \leq x_i$ 
        and $x_j \leq \beta_j$. 
       Now by the choice of $x$,  $\alpha_i \leq x_i$ implies that there is an path from 
       $\ji{\alpha_i}{i}$ to $\ji{\alpha_{s_i}}{s_i}$, so the arc
       $(\ji{(\beta_j + 1)}{j}, \ji{\alpha_{s_i}}{s_i})$
       gives us a path from $\ji{(\beta_j + 1)_{s_j}}{s_j}$ to $\ji{\alpha_{s_i}}{s_i}$.
       But then $\beta_j + 1 \leq x_j$  which contradicts
       $x_j \leq \beta_j$.   
     \qed\end{proof}

    As $\irrFam_L$ is closed, the following proposition allows us to assume that
    the digraph $\gD(L)$, for any sublattice $L$ of $\PC$, is a preorder.

    \begin{proposition}\label{prop:Rtrans}
      Let $D = \gD(\irrFam)$ for some sublattice $\PC \setminus \bigcup \irrFam$ of $\PC$. 
      Then $\irrFam$ is closed if and only if $D$ is transitive. 
     \end{proposition}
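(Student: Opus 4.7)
The plan is to use Lemma \ref{lem:tech} to translate the set-theoretic condition $\IIs \subset \cup \irrFam$ into the combinatorial condition ``there is an $(\alpha e_i, (\beta+1) e_j)$-path in $\mR$,'' while membership $\IIs \in \irrFam$ corresponds (via Definition \ref{const:R}) to the single arc $\alpha e_i \to (\beta+1)e_j$ lying in $\mR$. Under this dictionary, ``closure of $\irrFam$'' reads as ``paths in $\mR$ collapse to arcs,'' which is precisely transitivity. So the whole proposition becomes a near-tautological consequence of Lemma \ref{lem:tech}, plus some bookkeeping for arcs already present in the base graph $\cupC^\infty$.

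For the forward direction ($\irrFam$ closed $\Rightarrow$ $\mR$ transitive) I would take arbitrary arcs $a \to b$ and $b \to c$ in $\mR$ and write $a = \alpha e_i$ and $c = (\beta+1) e_j$ (using the convention $0 e_k = 0$ and $(n_k+1) e_k = \infty$ when needed). The concatenation $a \to b \to c$ is a path in $\mR$ from $\alpha e_i$ to $(\beta+1) e_j$, so by the equivalence (iii)$\Leftrightarrow$(i) of Lemma \ref{lem:tech} we obtain $\IIs \subset \cup \irrFam$. Closure forces $\IIs \in \irrFam$, and Definition \ref{const:R} then places the arc $a \to c$ in $\mR$.

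For the reverse direction ($\mR$ transitive $\Rightarrow$ $\irrFam$ closed) I would start from any $\IIs \subset \cup \irrFam$ and apply Lemma \ref{lem:tech} in the other direction to produce a path from $\alpha e_i$ to $(\beta+1) e_j$ in $\mR$. Iterating the $2$-step transitivity condition along this path collapses it to the single arc $\alpha e_i \to (\beta+1) e_j$. This arc is either a within-chain arc already in $\cupC^\infty$ (which forces $i = j$ and $\alpha \geq \beta+1$, so $\IIs$ is an empty interval and thus belongs to $\irrFam$ by our standing assumption (i)), or it was introduced by some interval of $\irrFam$, and by Definition \ref{const:R} that interval is $\IIs$ itself.

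The main obstacle I anticipate is not the main argument but the corner-case bookkeeping for vertices $0$ and $\infty$ of $\cupC^\infty$: because of the identifications $0 e_i = 0$ and $(n_i+1) e_i = \infty$ across all $i$, a given arc in $\mR$ incident to $0$ or $\infty$ corresponds simultaneously to several intervals of $\irrFam$, and one must check that transitivity still recovers \emph{all} members of a closed family and not merely one representative. This is precisely what closure conditions (ii) and (iii) from the preceding subsection guarantee, so the identifications cost us nothing; aside from spelling this out carefully, the proof is immediate from Lemma \ref{lem:tech}.
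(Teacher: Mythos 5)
Your proof is correct and follows essentially the same route as the paper's: both directions reduce to the equivalence (i)$\Leftrightarrow$(iii) of Lemma \ref{lem:tech} together with Definition \ref{const:R}, your forward direction being verbatim the paper's and your reverse direction being the direct restatement of the paper's contrapositive (``not closed gives a path without the corresponding arc''). Your extra bookkeeping at $0$ and $\infty$ is harmless and slightly more careful than the paper, which silently relies on irreducible intervals being identified as subsets of $\PC$ so that the differently labelled intervals in conditions (ii) and (iii) coincide.
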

     \begin{proof}
      On the one hand, assume that $\irrFam$ is closed. 
      Then we can replace (i) of Lemma \ref{lem:tech} with
       $\IIs \in \irrFam$. 
      Now that $\ji\gamma{k} \to \ji\beta{j}$ and
      $\ji\beta{j} \to  \ji\alpha{i}$ in $D$ gives, by 
      Lemma \ref{lem:tech}, that $\II{i}{k}\alpha\gamma \in \irrFam$
      which means $\ji\gamma{k} \to \ji\alpha{i}$ in $D$. 
      So $D$ is transitive. 

    On the other hand, assume that $\irrFam$ is not closed. 
    Then there is some $\IIs$ in $\bigcup \irrFam$ but not 
    in $\irrFam$. So there is a $(\jis,\mis)$-path
    in $D$, while  $(\jis,\mis)$ is not 
    in $D$.     
    \qed\end{proof}




  \section{ Main Theorem: Sublattices of $\PC$ as downset lattices} \label{sect:downsets}


   As with posets, it is clear for digraphs that unions and intersections of 
   downsets are downsets, so $\cDnt(D)$ is indeed a poset for any digraph $D$. 
   We only need this when $D$ is a spanned extension of some $\dC$, 
   but in this case, we get more.

 \begin{theorem}\label{thm:mainIsoDownsets}
        Let $L$ be a sublattice of a  product of chains $\PC$, and $D = \gD(L)$.
        The map $\IMc: L \to \cDnt(D): x \mapsto \id x \cap \dC$ is a lattice isomorphism. 
  \end{theorem}
  \begin{proof}
     As it simplifies induction, we prove the slightly more general result that $\IMc$
     is an isomorphism for $D = \gD(\irrFam)$ where $\irrFam$ is any family of irreducible
     intervals of $\PC$ such that $L = \PC \setminus \bigcup\irrFam$.        
     In the case that $\irrFam$ is empty, or contains only empty families,
     we have $L = \PC$ and $D = \dC$, 
     so the isomorphism is given in Lemma \ref{lem:baseTermSetIso}.%
     
     In the general case, we first 
     observe that $\cDnt(D)$ is a subposet of $\PC =  \cDnt(\dC)$.
     To see that it is a subset, observe that adding arcs to $D$ can only 
     remove downsets, not create new ones: if a downset in $D$
     contains $\ji{x}{i}$,
     then it must contain $\jis$ for all $\alpha < x$. So
     any downset set of $D$ is the union, over its elements $x$,  of the downsets
     $\id x$ they generate in $\dC$.   
     As  $\cDnt(D)$ is also ordered by inclusion, its order is induced from 
     $\cDnt(\dC)$, so is a subposet.

     Now it is enough to show that  $\IMc: L \to \cDnt(D)$ a bijection.  We do 
     this by induction on the size of $\irrFam$.
      Let $\IIs \in \irrFam$, $\irrFam' = \irrFam \setminus \IIs$, $L' = \PC \setminus \bigcup \irrFam'$, and 
      $D' = \gD(\irrFam')$. 
      By induction we have that $\IMc: L' \to \cDnt(D')$ is a bijection.

      We must show for $\IMc(x) \in \cDnt(D')$, that $\IMc(x) \not\in \cDnt(D)$ if and
      only if $x \in \IIs$.  Now $\IMc(x)$ being a downset in $D'$, but not in 
      $D$ which we get from $D'$ by adding the arc $( \jibp, \jis )$,
      means exactly that
      $\jis \in \IMc(x)$ and $\jibp \not\in \IMc(x)$. This is to say 
      $\alpha \leq x_i$ but $(\beta + 1) > x_j$, which means that $x \in \IIs$, 
      as needed.  
   \qed\end{proof}

    By the theorem, we have that any sublattice of $\PC$ can be expressed as the 
    lattice  $\cDnt(D)$ of non-trivial downsets of some spanned extension $D$ of $\dC$. 
    On the other hand, it is clear that for a spanned extension $D$ of $\dC$ the
    family 
        \[ \irrFam_D = \{ \IIs \mid (\jibp,\jis) \in D \setminus \dC\}\] 
    is such that $\cDnt(D)$ is the sublattice $\PC \setminus \bigcup \irrFam_D$
    of $\PC$. While several digraphs $D$ may yield the same sublattice
    of $\PC$, as may families of intervals have the same union, it is clear that
    there is a unique closed family of intervals defining a given sublattice. 
    So by  Proposition \ref{prop:Rtrans} we get the following. 

    \begin{corollary}\label{cor:1to1}
     The map $L \mapsto \gD(L)$ 
     gives a one-to-one
     correspondence between the sublattices of $\PC$ and the spanned preorder
     extensions of $\dC$. 
    \end{corollary}

    \section{ Classification of sublattices }\label{sec:char}

    Recall that a sublattice $L$ of $\PC$ is a 
   $\{\zero,\unit\}$-sublattice if it contains the extremal elements
    $\zero_\PC$ and $\unit_\PC$ of $\PC$.
    It is {\em subdirect} if for each $i \in [d]$, the projection 
    $\pi_i: L \to C_i$ is surjective; this is necessarily a
    $\full$-sublattice.
    A $\full$-sublattice is {\em tight} if its covers are covers of $\PC$.

  It was shown in \cite{La98} that every tight sublattice of a product
  of chains is subdirect. The converse was also claimed, but the proof was
  flawed: indeed, the lattice 
     $L = Z_3 \times Z_3 \setminus (\II{1}{2}{2}{1} \cup \II{2}{1}{2}{1})$
  shown in Figure~\ref{fig:subNotTight} is a subdirect sublattice of 
  $Z_3 \times Z_3$,
  but not tight.
  One notices in this example that $\gD(L)$ has a cycle. 
  We will see that this is indicative of non-tight sublattices.

  \begin{figure}
  \begin{center}
  
  \setlength{\unitlength}{300bp}%
  \begin{picture}(.8,0.46874999)%
    \put(.045,.11){\includegraphics[scale=.82]{./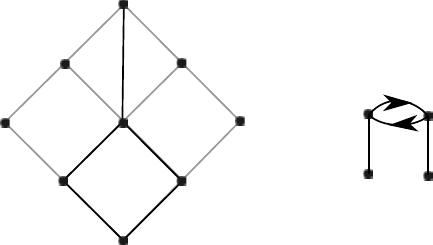}}%
    \put(0.50,0.27){\color[rgb]{0,0,0}\makebox(0,0)[lb]{\smash{2}}}%
    \put(0.62,0.27){\color[rgb]{0,0,0}\makebox(0,0)[lb]{\smash{2}}}%
    \put(0.62,0.19){\color[rgb]{0,0,0}\makebox(0,0)[lb]{\smash{1}}}%
    \put(0.50,0.19){\color[rgb]{0,0,0}\makebox(0,0)[lb]{\smash{1}}}%
    \put(0.28,0.12){\color[rgb]{0,0,0}\makebox(0,0)[lb]{\smash{$L$}}}%
    \put(0.52,0.12){\color[rgb]{0,0,0}\makebox(0,0)[lb]{\smash{$\gD(L)$}}}%
  \end{picture}%
 \caption{Non-tight subdirect sublattice of $Z_3 \times Z_3$}\label{fig:subNotTight}
  \end{center}
   
\end{figure}

    The {\em height} of a lattice is the length of a maximum ascending 
    chain of covers from $\zero$ to $\unit$. It is well known that for
    a distributive lattice,
    every cover is in a maximum ascending chain. It follows that 
    a sublattice $L$ of $\PC$ is tight if and only if $L$ and $\PC$ have the
    same height. 
    An arc $(x,y)$ in a digraph $D$ is an {\em in-arc} of $y$ and an {\em out-arc}
    of $X$.


    \begin{lemma}\label{lem:full}
     Let $L$ be a sublattice of $\PC$ and $D = \gD(L)$.
      \begin{enumerate}
         \item 
     $L$ is a $\{\zero,\unit\}$-sublattice  if and only if 
     $\zero$ is a source (has no in-arcs) in $D$ and 
     $\unit$ is a sink (has no out-arcs). 
         \item $L$ is subdirect if and only if $D$ has no {\em down edges}:
               those of the form $\ji\alpha{i} \to \ji{(\alpha-1)}i$.
         \item $L$ is tight if and only if $D$ is a poset.
       
     \end{enumerate}
             
    \end{lemma} 
    \begin{proof}
         We have by Theorem \ref{thm:mainIsoDownsets} that $\IMc : L \to  \cDnt(D)$
          is an isomorphism. 

     \noindent    (i)  Clearly $\IMc(\zero) = \{ \zero \}$ is a downset of $D$ if and only if
         $\zero$ is a source in $D$, and 
         $\IMc(\unit) = \dC \setminus \{\unit\}$ is a downset if and only if
         $\unit$ is a sink. The result follows.

     \noindent (ii). We have $\ji{\alpha}{i} \to \ji{(\alpha-1)}{i}$ in $D$ 
       if and only if there is no downset $\IMc(x)$ of $D$ containing 
      $\ji{(\alpha-1)}{i}$ but not $\ji{\alpha}{i}$. 
       This is if and only if there is no element $x \in L$ with 
       $x_i = \alpha-1$. 
         
   \noindent  (iii).  First assume that $L$ is tight, and assume,
      towards contradiction that $D$ contains a cycle
       $\ji{\alpha_{1}}{i_1} \to \ji{\alpha_2}{i_2} \dots \to 
      \ji{\alpha_\l}{i_\l} \to \ji{\alpha_1}{i_1}$, for some $\l \geq 2$. 
     We show that no vertex $x$ in $L$ has $x_{i_1} = \alpha_1$, which 
     contradicts the fact that $L$ is tight.  
     Indeed, if $x$ did have $x_{i_1} = \alpha_1$, then
     $x_{i_1} \geq \alpha_1$ so by Lemma \ref{lem:tech} $x_{i_2} > \alpha_2$ so
     $x_{i_3} > \alpha_3$ etc., and we get that $x_{i_1} > \alpha_1$,
      a contradiction. 
      
     On the other hand, assume that $D$ has no cycles. Then its vertices can be
     ordered so that   all arcs go up.  Visiting vertices from the bottom of this ordering,
      one at a time, we get an ascending  walk in $L \iso \cDnt(D)$ from $\zero$ to $\unit$
     of  size $|D|$, showing that $L$ has height equal to the height of $\PC$.
     Thus $L$ is a tight sublattice.          
    \qed\end{proof}

     For any preorder $D$ with named elements $\zero$ and $\unit$, let $D^*$ be the
     subgraph induced by $D \setminus \{ \zero, \unit \}$.      
    A spanned extension $\gD(L)$ of $\dC$ yields a spanned extension 
    $\gD^*(L)$ of $\CD^* := (\dC)^*$, so with Lemma \ref{lem:full}(i),
    Corollary \ref{cor:1to1}  restricts to the following. We use Lemma \ref{lem:full}(iii)
    in the second statement.

    \begin{corollary} \label{cor:RisJ2}
      The map $L \mapsto \gD^*(L)$ gives a one-to-one correspondence between the
      $\full$-sublattices of $\PC$ and the spanned preorder extensions of $\CD^*$.
      It gives a  one-to-one correspondence
      between the tight sublattices of $\PC$ and the spanned poset extensions of $\CD^*$.
    \end{corollary}

     Lemma \ref{lem:full}(i) also gives us the following.

     \begin{lemma}\label{lem:fulltech}
     Let $L$ be a $\full$-sublattice of $\PC$ and $D = \gD(L)$.
     Then the map $\cDnt(D) \to \cD(D^*): S \mapsto S \setminus \{\zero\}$
     is an isomorphism. 
     \end{lemma}
     \begin{proof}
         We show that $S \mapsto S \setminus \{\zero\}$ is a bijection; as the orders
         in $\cDnt(D)$ and $\cD(D^*)$ are subset orders, it then clearly preserves
         order.  

         As $\{\zero\}$ is in any non-trivial downset $D$ and has no in-arcs, 
         it is clear that for any non-trivial downset $S$ of $D$,
        $S \setminus \{\zero\}$
         is downset of $D^*$. 
         We must show that there are no other downsets of $D^*$.
       Assume $S$ is a downset in $D^*$ but $S \cup \{\zero\}$ is not a downset of 
       $D$.  Then $S$ must have some in-arc in $D$ that does not exist in $D^*$.
       This can  only be an arc from $\unit$ but by Lemma \ref{lem:full}(i), 
       there are no such arcs.
     \qed\end{proof}

     Using Lemma \ref{lem:fulltech} to extend the isomorphism 
     $\IMc: L \to \cDnt(D)$
     of Theorem \ref{thm:mainIsoDownsets} to an isomorphism
  $\IM: L { \to}\cDnt(D) \to \cD(D^*)$, we get the following.
     
 \begin{corollary}\label{cor:mainIsoDownsetsFull}
        Let $L$  be a $\full$-sublattice of $\PC = \prod\CD$ and let $D^* = \gD^*(L)$.
        The map $\IM: L \to \cD(D^*): x \mapsto \id x \cap \CD$ 
        is a lattice isomorphism. 
  \end{corollary}
     
  That Corollary \ref{cor:mainIsoDownsetsFull} and so Theorem \ref{thm:mainIsoDownsets}
  are extensions of Theorem \ref{thm:birkEmb} become more explicit with the following
  observation.

    \begin{lemma}\label{lem:RisJ}
       If $L$ is a tight sublattice of $\PC$,
       then 
        $\gD^*(L) \iso \JI{L}$.
     \end{lemma}
     \begin{proof}
 If $L$ is a tight sublattice of $\PC$ then $\gD(L)$ is
   a poset by Lemma \ref{lem:full}(iii), and so 
   $\gD^*(L)$ is too. 
   Thus $L \iso \cDnt(\gD(L)) \iso \cD(\gD^*(L))$ using 
   Lemma \ref{lem:fulltech}, and the statement follows by 
   Corollary \ref{cor:revBirkEmb}.
     \qed\end{proof}

   \section{Back to Embeddings}\label{sect:Koh}

    We now view a sublattice $L$ as the image of an embedding 
    $e:L \to \PC$ and the subgraph $\dC$ of $D$ as the image of a bijective
    homomorphism $\phi: \dC \to D$. As such, Corollary \ref{cor:1to1} gives a
    correspondence  between embeddings of lattices into
    $\PC$ and surjective homomorphisms of $\dC$ to preorders $D$.

    
    Formally, a {\it pointed union of chains} is any poset that we can form from
    a parallel union of disjoint chains by identifying the minimum elements
    of the chains and adding  a new maximum element $\unit$-- that is, any
    poset that can be represented as $\dC$ for some product $\PC = \prod \cC$ of
    chains. A {\it pointed chain decomposition} of a preorder $D$ is a bijective
    homomorphism $\phi:\dC \to D$ from a pointed union of chains. 

    Given a pointed chain decomposition $\phi: \dC \to D$ of $D$. 
    the map $\IMc$ of Theorem~\ref{thm:mainIsoDownsets} becomes
    $\IMc: L \to \cDnt(D): x \mapsto \phi(\id x \cap \dC)$ and its inverse
    becomes $\IMc^{-1}: S \mapsto \bigvee_\PC \phi^{-1}(S)$. This
    gives the following. 

   \begin{corollary}\label{cor:MainTermRev}
    For any pointed chain decomposition $\phi: \dC \to D$ of a preorder $D$
    the map $e_{\phi}: S \mapsto \bigvee_\PC \phi^{-1}(S)$
    is an embedding of $\cDnt(D)$ into $\PC$.
   \end{corollary}

   By Lemma \ref{lem:full}, $e_{\phi}$ is $\full$-embedding if and only 
   if $\zero$ is a source of $D$ and $\unit$ is a sink. In this case
   $\phi^{-1}(\zero) = \{\zero\}$ and $\phi^{-1}(\unit) = \{\unit\}$.
    Similar to the observation we made before Corollary \ref{cor:RisJ2},
    this means that $\phi(\CD^*)$ is a chain decomposition of $D^*$.
   As $\CD^* = (\dC)^*$ is the union of the chains of $\CD$, with
    their zero elements removed, and $\CD^*_0$ is the family of
    chains we get from the chains in $\CD^*$ by adding a new minimum
    element to each, $\CD^*_0 = \CD$.

 The following is thus a corollary of Corollary \ref{cor:mainIsoDownsetsFull}.
 Identifying $\phi(\CD^*)$ with $\CD^*$, it is a  generalization of
  Theorem \ref{thm:dilEmb}.

     \begin{corollary}\label{cor:MainTermRevFull}
       For any chain decomposition $\phi:\CD^* \to D^*$ of a preorder $D^*$ let 
      $\CD = \CD^*_0$, the map
       $e_\phi: S \mapsto  \bigvee_\PC S$ 
      is a $\full$-embedding of $\cD(D^*)$ into $\PC = \prod \CD$. 
      \end{corollary} 

    Now, for given distributive lattice $L$, there are several preorders
    $D$ for which $L \iso \cDnt(D)$, so which yield embeddings of $L$ into
    a product of chains.  To get back to a canonical $D$, we first must 
    take a quotient.

    \subsection{A Canonical Quotient of $\gD(L)$}

    The {\em non-preference} relation '$\sim$' defined on a   
  preorder $D$ by letting $x \sim y$ if $x$ and $y$ are
    in a directed cycle is an equivalence relation.
    The quotient $P_D:=D/\sim$ is  well known to be a poset.
    Classes of $P_D$  satisfy  $[a] \leq [b]$ if and only
    if $a \leq b$ in $D$. 
    
    Now the {\em quotient homomorphism} 
    $q : D \to P_D : a \mapsto [a]$
    induces maps between $2^D$ and $2^{P_D}$. For 
    $T \subseteq D$, we let $[T]:= \{[x] \mid x \in T\} \subseteq P_D$, and
    for $S \subseteq P_D$ we let $\bigcup S = \bigcup_{[x] \in S}[x]$.
    Clearly $[\bigcup S] = S$ and $\bigcup [T] = T$.

    \begin{lemma}\label{lem:quotient}
      Let $D$ be a preorder, then  
       $\cDnt(D) \to \cDnt(P_D): T \mapsto [T]$
      is an isomorphism with inverse $S \mapsto \bigcup S$.   
    \end{lemma}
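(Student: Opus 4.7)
The plan is to verify three things in turn: (1) the map $T \mapsto [T]$ is well-defined, sending proper terminal sets of $\mR$ to proper terminal sets of $P_\mR$; (2) the assignment $S \mapsto \cup S$ is a two-sided inverse; (3) inclusion is preserved in both directions, so the bijection is an isomorphism of posets (hence of lattices).

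The single key observation, from which almost everything falls out, is that any terminal set $T$ of $\mR$ is \emph{saturated} under $\sim$, i.e., is a union of $\sim$-equivalence classes. Indeed, if $x \in T$ and $x \sim y$, then $x$ and $y$ lie on a common directed cycle in $\mR$, so in particular there is an $xy$-path; since $T$ is terminal, $y \in T$. This immediately gives $\cup [T] = T$, while $[\cup S] = S$ is noted before the lemma statement, proving (2).

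For (1), saturation lets us read off from $T \in \Term(\mR)$ that $[T]$ is terminal in $P_\mR$: if $[x] \in [T]$ and $[x] \geq [y]$ in $P_\mR$, then by the definition of the ordering on $P_\mR$ we have the arc $x \to y$ in $\mR$; since $T$ is terminal, $y \in T$ and so $[y] \in [T]$. The same idea, reversed, shows that $\cup S$ is terminal in $\mR$ whenever $S \in \Term(P_\mR)$, which is the well-definedness of the inverse. Properness of $[T]$ is automatic: $[T] = \emptyset$ clearly forces $T = \emptyset$, and $[T] = P_\mR$ forces $T = \mR$ precisely because $T$ is saturated.

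For (3), inclusion preservation is immediate in the forward direction, and for the reverse direction it uses saturation once more: if $[T] \subseteq [T']$ and $x \in T$, then $[x] \in [T'] $, so $x$ lies in some equivalence class contained in $T'$, and hence $x \in T'$. The main (mild) technical obstacle is being careful that "arc in $P_\mR$" really corresponds to "arc in $\mR$" between representatives, which is exactly what the well-definedness of the quotient ordering (and thus the transitivity hypothesis on $\mR$) guarantees; once this is set up the proof is essentially bookkeeping around the saturation property.
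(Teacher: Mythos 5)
Your proof is correct and follows essentially the same route as the paper's: show $[T]$ is terminal in $P_\mR$, show $\cup S$ is terminal in $\mR$, and note that since both orders are inclusion the bijection is an order isomorphism. Your explicit observation that terminal sets are saturated under $\sim$ is a worthwhile addition, since it is exactly what justifies the identity $\cup[T]=T$ that the paper asserts (somewhat too broadly, for arbitrary $T\subset \mR$) just before the lemma.
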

    \begin{proof}
      Clearly $[T] \in P_D$ is a downset if $T \subset D$ is, and $\bigcup S \subset D$
      is if $P_D$ is, so $T \mapsto [T]$ is a bijection. 
      That is preserves order is also clear as  the order
       on both posets is inclusion, and $[T]$ simply
       partitions a set $T$  with respect to an underlying
       fixed partition of $D$. 
    \qed\end{proof}

   Recall that for a lattice $L$, $\JId{L}$ is the subgraph induced by 
   $\JI{L} \cup \{\zero, \unit\}$.
   By Fact \ref{fact:nonEmptyD} we have $\cDnt(\JId{L}) \iso \cD(\JI{L}) \iso L$, and
   by Theorem \ref{thm:mainIsoDownsets} and Lemma \ref{lem:quotient} that 
   $L \iso \cDnt(D)) \iso \cDnt(P_D)$ where $D = \gD(L)$. So by 
   Corollary \ref{cor:revBirkEmbEx} we get the following.  
   \begin{corollary}
     For a sublattice $L$ of a product $\PC = \prod \CD$,
     $P_D \iso \JId{L}$. 
   \end{corollary}
   
    
     \subsection{Embeddings of $L$  and homomophisms to $\JId{L}$}

       Let $e:L \to \PC$ be an embedding of a lattice into a product of 
       chains, so Corollary \ref{cor:1to1} yields a corresponding
       spanned extension $D = \gD(e(L))$ of $\dC$, which we again
       view as a pointed chain decompositon $\phi_0: \dC \to D$.  
       Composing with $q: D \to P_D  \iso  \JId{L} =: J^\infty$, we get
       a surjective homomorphism $\phi_e = q \circ \phi_0: \dC \to J^\infty$.

       Using the isomorphsim $L \iso \cDnt(J^\infty)$ we get an explicit
       formula for $\phi_e$, and show that $e \mapsto \phi_e$ is a one-to-one
       correspondence.  For this section  we denote a principle downset
       $\id x$ of $J^\infty$ by $\pid{x}$, and its unique predecessor
       $\id x \setminus \{x\}$ in $\cDnt(J^\infty)$ by $\ppid{x}$.
       Similarily we use $\pid{\cdot}_\PC$ and $\ppid{\cdot}_\PC$
       for downsets in $\PC$.

       For any surjective homomorphism $\phi: \dC \to J^\infty$, let
       \begin{equation}\label{eqn:ephi}       
    e_\phi: \cDnt(J^\infty)\to\PC: \pid{x} \mapsto \bigvee \phi^{-1}(\pid{x}).
       \end{equation}
       For an embedding $e:\cDnt(J^\infty)\to\PC$, let 
      $\phi_e: \dC \to J^\infty$ be defined by
      \begin{equation}\label{eqn:phie}
  \phi_e^{-1}(x) = [\pid{ e \pid{x}}_\PC \setminus
       \ppid{ e \pid{x}}_\PC ]  \cap \dC.
      \end{equation} 
     
     We are ready for our generalisation of Corollary \ref{cor:La}.   

      \begin{theorem}\label{thm:one2one5}
       For any distributive lattice $L$, let $J^\infty = \JId{L}$.
       The maps $\phi \to e_\phi$ and $e \to \phi_e$ defined in 
       equations \eqref{eqn:ephi} and \eqref{eqn:phie} give a one-to-one
        correspondence between embeddings of
       $\cDnt(J^\infty) \iso L$ into products of chains,
        and surjective homomorphisms
       of pointed union of chains to $J^\infty$. 
     \end{theorem}
     \begin{proof} 

     For a downset $\pid{x}$ we have
     $\phi^{-1}_e\pid{x} = \pid{ e \pid{x}}_\PC \cap \dC$, and so
     $e =e_{\phi_e}$ is immediate. Indeed, recalling for the last equality that
     $p \to \id p \cap \dC$ and $S \to \bigvee S$  are inverse :
         \[ e _{\phi_e} \pid{x} = \bigvee \phi^{-1}_e \pid{x}  = \bigvee \pid{ e \pid{x}}_\PC \cap \dC =e \pid{x}. \]

       To see that $\phi_{e_\phi} = \phi$, let $x$ be in $J^\infty$. Using in the third line that $\pid{p} \setminus \ppid{p} = p$ for any $p$, we get 
        \begin{eqnarray*}
           \phi^{-1}_{e_\phi}(x)
         & = & \left[ \pid{e_\phi\pid{x}}_\PC \setminus \ppid{e_\phi\pid{x}}_\PC \right] \cap \dC \\
         & = & \left[ \pid{\bigvee \phi^{-1}\pid{x}}_\PC\setminus \ppid{\bigvee \phi^{-1}\pid{x}}_\PC  \right] \cap \dC \\
         & = & \left[\bigvee \phi^{-1}\pid{x}\right] \cap \dC \\
         & = & \phi^{-1}(x).
       \end{eqnarray*} 
        
     \qed\end{proof}






   \subsection{Classification of Embeddings}

    For an embedding $e: L \to \PC$, we have a surjective homomorphism 
       $\phi_e = q \circ \phi_0: \dC \to \gD(e(L)) \to J^\infty$. 
    We observed for Corollary \ref{cor:MainTermRevFull} that when $e$ is a 
    $\full$-embedding then be removing extremal elements,  
    $\phi_0$ induces a homorphism of $\CD^*$ to $D^*$. This extends by $q$
    to a surjection $\phi_e: \CD^* \to \JI{e(L)}$. 
    Again by Lemma \ref{lem:full}, $e$ is subdirect if and only if
    $D = \gD(e(L)) $
    has no down edges, which is true if and only if all $\phi_e$ is
    injective on all chains of $\CD^*$.
    It is tight if and only if $D$ contains no cycles, which is true
    if and only if $\phi_e$ is injective.  

 We have thus showed the following corollary to Theorem~\ref{thm:one2one5}.

      \begin{corollary}\label{cor:one2one5}
         The constructions $e \to \phi_e$ and $\phi \to e_\phi$ give 
         a one-to-one correspondence between full embeddings of $L$ into
         products of chains, and surjective homomorphism of disjoint unions
         of chains to $\JI{L}$. Subdirect embeddings correspond with 
         homomorphisms that are injective on each chain, and tight embeddings
         correspond with bijections-- so-called chain decompositions. 
      \end{corollary}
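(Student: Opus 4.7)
The plan is to deduce the corollary by specializing Theorem~\ref{thm:one2one5} to full embeddings and then reading off the two refinements from Lemma~\ref{lem:Jacyclic}. First, when $L = \PC - \cup\irrFam$ is full, Theorem~\ref{thm:mainIsoDownsets} together with Lemma~\ref{lem:quotient} gives $L \iso \Term(\mR) \iso \Term(P_\mR)$; since $P_\mR$ is a poset, $\Term(P_\mR) = \cD(P_\mR)$, so Corollary~\ref{cor:revBirkEmb} identifies $P_\mR$ with $J_L^\infty$ and, passing to the non-pointed part, $P_{\mR^*} \iso J_L$. Theorem~\ref{thm:one2one5}, restricted to the full setting using the remarks immediately preceding Definition~\ref{def:chainCover}, then gives a bijection between full embeddings $E$ of $L$ into products of chains and surjective homomorphisms $\phi: \cupC^* \to J_L$, which by that same definition are precisely the loose chain covers of $J_L$.

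For the tight case I would argue as follows. By Lemma~\ref{lem:Jacyclic}(i), tightness of $L$ is equivalent to acyclicity of $\mR$, equivalently of $\mR^*$. When $\mR^*$ is acyclic, the equivalence relation $\sim$ on $\mR^*$ is trivial, so the quotient map $q: \mR^* \to P_{\mR^*} = J_L$ is a vertex-bijection, and hence the composition $\phi: \cupC^* \hookrightarrow \mR^* \to J_L$ is a vertex-bijection whose restrictions $\phi|_{C_i^*}$ are injective with pairwise disjoint images---that is, a chain decomposition. Conversely, a chain decomposition makes $\phi$ a vertex-bijection, and since $\phi$ factors through the inclusion $\cupC^* \hookrightarrow \mR^*$ (which is already a vertex-bijection), the quotient $\mR^* \to P_{\mR^*}$ must be the identity on vertices, forcing $\mR^*$ to be acyclic.

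For the subdirect case, I would show that $\phi|_{C_i^*}$ being injective for every $i$ is equivalent to $\mR^*$ having no up edges, and then invoke Lemma~\ref{lem:Jacyclic}(ii). The easy direction is that an up edge $\alpha e_i \to (\alpha+1) e_i$ together with the chain arc $(\alpha+1) e_i \to \alpha e_i$ forms a directed $2$-cycle, so $\alpha e_i$ and $(\alpha+1) e_i$ are identified in the quotient, destroying injectivity of $\phi|_{C_i^*}$. The main obstacle is the converse: rather than chase general cycles through multiple chains, I would argue directly on the embedding side, using the identity $E_\phi(S)_i = |\phi^{-1}(S) \cap C_i^*|$ obtainable from $E_\phi(S) = \bigvee \phi^{-1}(S)$. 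If $\phi|_{C_i^*}$ is injective, then for $k \in \{0,\dots,n_i\}$ the downset $S$ of $\phi(k e_i)$ in $J_L$ satisfies $\phi^{-1}(S) \cap C_i^* = \{1 e_i,\dots,k e_i\}$, so all values $0,\dots,n_i$ are attained in coordinate $i$; if instead $\phi(\alpha e_i) = \phi(\beta e_i)$ with $\alpha > \beta$, then for every $S \in L$ the preimage $\phi^{-1}(S) \cap C_i^*$ either contains both vertices (size $\geq \alpha$) or neither (size $< \beta$), so the values $\beta,\dots,\alpha-1$ are missed and $E_\phi$ fails to be subdirect. Combined with Lemma~\ref{lem:Jacyclic}(ii), this yields the subdirect/chain-cover equivalence.
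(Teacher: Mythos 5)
The paper states this corollary without a separate proof, presenting it as an immediate consequence of Theorem~\ref{thm:one2one5}, the restriction-to-$\cupC^*$ discussion preceding Definition~\ref{def:chainCover}, and the characterisations in Lemma~\ref{lem:Jacyclic}; your write-up supplies exactly those intended details and is correct, including the identification $P_{\mR^*}\iso J_L$ via Corollary~\ref{cor:revBirkEmb}. Your subdirect case is verified by a direct coordinate computation with $E_\phi(S)_i=|\phi^{-1}(S)\cap C_i^*|$ rather than passing entirely through the no-up-edges condition of Lemma~\ref{lem:Jacyclic}(ii), but this is a self-contained equivalent of the same argument.
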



       


  \section{Sublattices of $\PC$ as Downset or Independent set Lattices}\label{sec:IndSet}

  In this section, when we have a product $\PC = \prod C_i$
  of a family $\CD = \{C_1, \dots, C_d\}$ of chains, $\cT$
  will be the {\em union of tournaments} we get from the
  parallel sum of the chains in $\CD$ by removing the loops
  from all vertices. 
  As we will be now referring to elements of a chain $C_i$ individually,
  we afix a superscript, for example,  denoting the element $3$ or $x_j$,
  of $C_i$ by $3^{(i)}$ or $\x{i}_j$ respectively.

     \begin{definition}[$\gA(\irrFam)$ and $\gA(L)$]\label{const:A}
       For a family $\CD = \{C_1, \dots, C_d\}$ of chains
       and a family of irreducible intervals $\irrFam$ of 
      $\PC = \prod \CD$,  let $\gA(\irrFam)$ be the digraph we
      get from $\cT$ by adding the arc
        $(\ajb, \aia)$ for each $\IIs \in \irrFam$.
      For a sublattice $L$ of $\PC$, let $\gA(L) = \gA(\irrFam_L)$. 
      (See Figure \ref{fig:RivEx3} for examples of $\gA(L)$ corresponding
       to the sublattices $L$ of $\PC$ in Figure \ref{fig:RivEx2}. The 
       digraph $\gA(L)$ is still the transitive closure of the depicted
       digraph, where all unoriented arcs are oriented up, but it is no
       longer reflexive. )
    \end{definition}

 \begin{figure}
   \centering{
    \setlength{\unitlength}{.9mm}%
   \begin{picture}(120,50)(0,0)%
    \put(0,8){\includegraphics[scale=.63]{./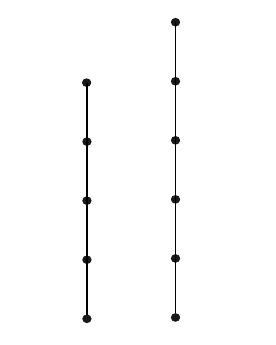}}%
    \put(7,3){$\cT = \gA(\PC)$} 
    \put(4,38){$\ai41$}
    \put(22,45){$\ai52$}
    \put(4,10){$\ai01$}
    \put(22,10){$\ai02$}
    \put(30,8){\includegraphics[scale=.63]{./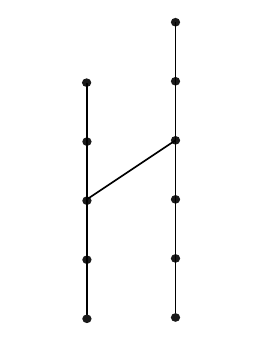}}%
    \put(40,3){$\gA(L_1)$}
    \put(34,24){$\ai21$} 
    \put(52,31){$\ai32$}
    \put(34,38){$\ai41$}
    \put(52,45){$\ai52$}
    \put(34,10){$\ai01$}
    \put(52,10){$\ai02$} 
    \put(5,0){
    \put(55,8){\includegraphics[scale=.63]{./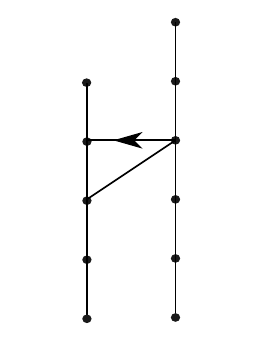}}%
    \put(65,3){$\gA(L_2)$} 
    \put(77,31){$\ai32$}
    \put(59,38){$\ai41$}
    \put(77,45){$\ai52$}
    \put(59,10){$\ai01$}
    \put(77,10){$\ai02$}}
    \put(8,0){
    \put(80,8){\includegraphics[scale=.63]{./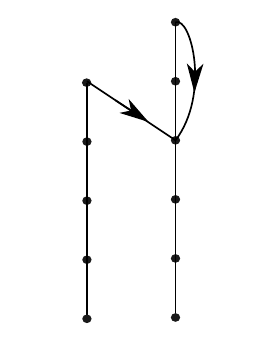}}%
    \put(90,3){$\gA(L_3)$} 
    \put(102,31){$\ai32$}
    \put(84,38){$\ai41$}
    \put(103,45){$\ai52$}
    \put(84,10){$\ai01$}
    \put(102,10){$\ai02$}}
    
  \end{picture}%
  }
   \caption{ Sublattices of $\PC = Z_4 \times Z_5$ and their digraphs $\gA$}
    \label{fig:RivEx3}  
  \end{figure}

  A directed path $x_1 \to x_2 \to \dots \to x_d$ in a 
 digraph is {\em non-trivial} if $d \geq 1$.  
 Observe that the directed path $x$ is trivial, while the
 directed path, or {\em loop}, $x \to x$, is non-trivial.

  For a vertex $x = (x_1, \dots, x_d)$ of $\PC$ we let $I_x$ be the set
  \[ I_x: =  \{\x{1}_1, \dots, \x{d}_d\}.\]

  \begin{definition}\label{def:indSet}
   A subset $I$ of a digraph $D$ is {\em independent} if 
   there is no non-trivial $xy$-path in $D$ for
   $x$ and $y$ in $I$.
  \end{definition}

  Note that this is a much stronger notion of independence than is usually defined
  for digraphs, it implies independence in the transitive closure of the graph. 
  In fact, no vertex that is in a cycle, including looped vertices, 
  can be in an independent set.
  So for a digraph like $\cT$  which we got by removing loops from a poset,
  the independent sets are exactly the antichains of the poset.
  For $\cT$ in particular, the $d$ element independent sets are exactly 
  $I_x$ for $x \in \PC$. 

  \begin{definition}[$\Ind_d(A)$]\label{def:indFam}
  Let $\Ind_d(A)$ be the family of independent sets of size $d$ 
  of a digraph $A$, and order it as follows. 
  For $I,I' \in \Ind_d(A)$, let 
  $I \leq I'$ if for each $a' \in I'$ there is an
  $aa'$-path in $A$ with $a \in I$. 
  \end{definition}

  When restricting to spanned extensions $A$ of $\cT$, for which all 
  $d$-element subsets are of the form $I_x$, this ordering
  has a simpler definition, observed in \cite{Ko83} in the case that $A$
  is a poset.

  \begin{lemma}\label{lem:ordEasy} 
     For a spanned extension $A$ of $\cT$ and $I_x, I_y \in \Ind_d(A)$,
     we have  $I_x \leq I_y$ if and only $x_i \leq y_i$ for all $i$.   
   \end{lemma}
   \begin{proof}
     The 'if' direction is immediate from Definition \ref{def:indFam} as
     all arcs of $\cT$ are in $A$. For the 'only if' direction, assume that
     $I_x \leq I_y$, and let $i \in [d]$.  We must show that $x_i \leq y_i$.
     Assume that it is not, so $y_i < x_i$, and so there is a directed path in 
     $A$ from $y_i$ to $x_i$. As $I_x \leq I_y$ there is a path
     in $A$ from $x_i$ to $y_j$ for some $j \in [d]$. If $i = j$ the $x_i$ and $y_i$
     are in a directed cycle, contradicting the fact that $x_i$ is in an independent
     set.  If $i \neq j$ then there is a path from $y_i$ to $x_i$ and one from $x_i$
     to $y_j$, contradicting the fact that $y_i$ and $y_j$ are in an independent set.   \qed\end{proof}

   The following is thus clear.

   \begin{lemma}\label{lem:baseIndSetIso}
       Where $\PC = \prod \CD$, the map  
      $I: \PC \to \Ind_d(\cT): x \mapsto I_x$
      is an isomorphism. 
   \end{lemma}

   Now, consider removing an irreducible interval to make a
   sublattice $L$ of $\PC$. Figure \ref{fig:RivEx3}
   shows which arcs we need to add to $\cT = \gA(\PC)$
   to make a digraph $\gA(L)$ so that the isomorphism 
   $L \iso \Ind_d(\gA(L))$ is preserved.
   Recall that $L_1 = \PC \setminus \II2132$.  
   To ensure that, for example,
   $I_{(2,3)} = \{\ai21, \ai32 \}$ is no longer independent, 
   we add the arc $(\ai21,\ai32)$ to $\cT$.
  This also ensures that such sets
   an $I_{(2,4)}$ and $I_{(1,5)}$  are no longer independent. 

   \begin{remark}\label{note:closedA}
   Though $L_3$ is $\PC \setminus \II2134$
   we 
   added the arcs $(\ai41,\ai32)$ and $(\ai52,\ai32)$.
   This is because the intervals $\II2134$ and $\II2235$ are
   the same. 
   \end{remark}




    The proof of the following is very similar to that of Theorem
    \ref{thm:mainIsoDownsets}. 

    \begin{theorem}\label{thm:mainIsoAntichain}
        Let $L$ be sublattice of $\PC = \prod \CD$,
        where $\CD = \{C_1, \dots, C_d\}$ is a family of disjoint chains, and
        let $A = \gA(L)$.
        The map 
           \[ I :L \to \Ind_d(A): x \mapsto I_x:= \{\ai{x_1}1, \dots, \ai{x_d}d \} \]
    is a lattice isomorphism.
    \end{theorem}

    \begin{proof}

     As adding arcs to $\cT$ or a spanned extension cannot not create new
     independent sets, we have that $\Ind_d(A)$ is a subset of $\PC =  \Ind_d(\cT)$;
     and so by Lemma \ref{lem:ordEasy} it is a subposet. 
    
     Thus it is enough to show that  $I: L \to \Ind_d(A)$ is a bijection.
     We do this by induction on the size of $A$.
     In the case that $A = \cT$, we have $L = \PC$, 
     so the isomorphism is given in Lemma \ref{lem:baseIndSetIso}.
     Now let $A = A' \cup \{(\ajb, \aia)\}$.     

       We must show that an independent set $I_x \in \Ind_d(A')$ is not in 
      $\Ind_d(A)$ if and only if $x$ is in $\IIs$. That is, we must show that
      $I_x \in \Ind_d(A') \setminus \Ind_d(A)$ if and only if $x_i \geq \alpha$ and
       $\beta \geq x_j$.  For $I_x \in \Ind_d(A')$, clearly $I_x \not\in \Ind_d(A)$ if
       and only if $(\ajb, \aia)$ completes a path between 
       $x_j$ and $x_i$, so if and only if $x_i \geq \alpha$ and $\beta \geq x_j$, 
       as needed.   \qed\end{proof}

     Taking the inverse isomorphism $I^{-1}$ in Theorem
     \ref{thm:mainIsoAntichain} we get an 
     `independent set analogue' to Corollary \ref{cor:MainTermRev}.
     A special case of it was implicit in \cite{Ko83}.  
     \begin{corollary}\label{cor:RevEmbAntichain}
       For any spanned extension $A$ of $\cupT$ 
       the map $I^{-1}: \Ind_d(A) \to \PC$
       is an embedding of $\Ind_d(A)$ into $\cP$.       
     \end{corollary}
    
    \subsection{Consequences}
   
    It is not hard to see that for a given family $\irrFam$ of irreducible
    intervals, we get from $\gD(\irrFam)$ to 
    $\gA(\irrFam)$ by 'dropping the source vertex of every
    arc by one'. 
    We do this for closed families $\irrFam$ with the following construction. 

   \begin{definition}\label{def:constK}
     For a  spanned extension $D$ of $\dC$,  
     let $A = \gK(D)$ be the digraph on the vertices of 
     $\cT$ with arcset    
     \[ \{ (\ajb, \jis)  \in \cT^2 \mid (\jibp, \jis) \in D \}. \]
   \end{definition}

   Some points to keep in mind for $A = \gK(D)$ are: 
   \begin{enumerate} 
    \item We have $\ji0i = \ji0j$ in $D$, but  $\ai0i \neq \ai0j$ in $A$.
    \item Arcs in $D$ originating at $\zero$ do not yield arcs in $A$.
    \item Arcs in $D$ originating at $\unit$ yield $d$ arcs in $A$
          (this agrees with what is observed in Remark \ref{note:closedA}).      
   \end{enumerate}

    Applying $\gK$ to the digraphs $\gD$ in Figure \ref{fig:RivEx2}
   which, recall,  are reflexive,  yields the corresponding digraphs $\gA$ in
   Figure \ref{fig:RivEx3}. The graphs in 
   Figure \ref{fig:RivEx3} are irreflexive
   except  $\gA(L_3)$ which, being transitive,
   has loops on the vertices $\ai32, \ai42,$ and $\ai53$. 
   Checking easily that $\gK(\dC) = \cT$ the following is
   an immediate consequence of Definitions \ref{const:D}
   and \ref{const:A}.
   We require $\irrFam$ to be closed because of point 3.
   mentioned just above.  
   
   \begin{fact}\label{fact:Kconst}
     If $\irrFam$ is a closed family of irreducible
     intervals then $\gA(\irrFam) = \gK(\gD(\irrFam))$.
   \end{fact} 
   
  Observe that this construction is clearly invertible, though only if defined
  on spanned extensions $A$ of $\cT$ coming from closed families $\irrFam$. 
  Its inverse  is essentially a generalisation of  Koh's construction from
   \cite{Ko83};
  the only difference is that Koh's construction would 
  have loops on $\cT$. As he considered only posets,
  this difference would be cosmetic for him, but it is
  significant for us. 

  The construction $\gK$ is useful in proving results such as the following.

    \begin{proposition}\label{prop:Dtrans}
       Let $A = \gA(\irrFam)$.  
       If $\irrFam$ is closed then $A$ is transitive.
    \end{proposition}
    \begin{proof}
      Assume that $\irrFam$ is closed. Then by Proposition \ref{prop:Rtrans}
      we have that $D = \gD(\irrFam)$ is transitive, and by Fact \ref{fact:Kconst}
      that  $A = \gK(D)$.   
      Now assume that $\ai\alpha{i} \to \ai\beta{j}$ and $\ai\beta{j} \to \ai\gamma{k}$
      in $A$. Then $\ai{(\alpha+1)}{i} \to \ai\beta{j}$ and 
      $\ai{(\beta+1)}{j} \to \ai\gamma{k}$ in $D$. 
      As we always have $\ai\beta{j}\to \ai{(\beta+1)}{j}$,
      we get an $(\ai{(\alpha+1)}{i}, \ai\gamma{k})$-path in $D$. By the transitivity
      of $D$, $ \ai{(\alpha+1)}{i} \to \ai\gamma{k}$ in $D$, and so 
      $\ai\alpha{i} \to \ai\gamma{k}$ in $A$, as needed for transitivity. 
    \qed\end{proof}

    \begin{lemma}\label{lem:Jacyclic}
      Let $L$ be a sublattice of $\PC$, and $A = \gA(L)$. Then 
      $L$ is subdirect if and only if $A$ is irreflexive, which is true
      if and only if $A$ is acyclic. 
    \end{lemma}
    \begin{proof}
      We have from Lemma \ref{lem:full} that $L$ is subdirect if and only
      if $\gD(L)$ has no edges of the form $\ji\alpha{i} \to \ji{(\alpha-1)}i$.
      As $A = \gK(\gD(L))$, this is true if and only if $A$ is irreflexive.
      As $A = \gA(\irrFam)$ for a closed family $\irrFam$, it is transitive,
      so this is true if and only if $A$ is acyclic.  
    \qed\end{proof}

     Similar to what we did in Section \ref{sect:Koh}
     we may view $\cupT$ as a {\it tournament decomposition} of $A$ rather
     than viewing $A$ as a spanned extension of $\cupT$, and get the following.
 
    \begin{corollary}
     For every digraph $A$ such that $\Ind_d(A) \iso L$,
     and every decomposition of $A$ into a family $\cT$ of $d$ tournaments,  
     there is an embedding of $L$ into $\PC$ such that 
     $A = \gA(L)$.
    \end{corollary}

    Specialising to the case that $A$ is acyclic (and so also irreflexive) then
    adding cosmetic loops, this gives the following result complementing
    Theorem \ref{thm:koh}. For a digraph $A$ let $A^r$ be the reflexive digraph we
    get by adding loops to every vertex.  

    \begin{corollary}\label{cor:antialldig}
      For every poset $P$ such that $\rA(P) \iso L$ there is a subdirect 
      embedding of $L$ into a product of chains $\PC$ such that $P = \gA^r(L)$.  
    \end{corollary} 
    
  

  \providecommand{\bysame}{\leavevmode\hboxto3em{\hrulefill}\thinspace}

 \end{document}